\documentclass[reqno]{amsart}
\usepackage{amsmath, amssymb, amsthm, geometry, enumerate, graphicx, amsfonts, hyperref, mathrsfs}
\hypersetup{colorlinks=true,linkcolor=red, anchorcolor=blue, citecolor=blue, urlcolor=red, filecolor=magenta, pdftoolbar=true}
\theoremstyle{plain}
\newtheorem{thm}{\it Theorem}[section]

\newtheorem{lem}[thm]{\it Lemma}
\theoremstyle{remark}
\newtheorem{defn}[thm]{Def{}inition}
\newtheorem{rem}[thm]{Remark}
\newtheorem{exa}[thm]{Example}
\numberwithin{equation}{section}
\usepackage{enumerate}
\usepackage{xcolor}
\allowdisplaybreaks
\begin{document}
\title [Nagy and Pollard-Hilding-Type Stability of Hilbert-Schmidt Frames]{Nagy and Pollard-Hilding-Type Stability of Hilbert-Schmidt Frames}

\author[Jyoti]{Jyoti}
	\address{{\bf{Jyoti}}, Department of Mathematics,
		University of Delhi, Delhi-110007, India.}
	\email{jyoti.sheoran3@gmail.com}

\author[Ruchi]{Ruchi}
	\address{{\bf{Jyoti}}, Department of Mathematics,
		University of Delhi, Delhi-110007, India.}
	\email{rgarg@maths.du.ac.in}

\author[Lalit   Kumar Vashisht]{Lalit  Kumar  Vashisht$^{*}$}
	\address{{\bf{Lalit  Kumar  Vashisht}}, Department of Mathematics,
		University of Delhi, Delhi-110007, India.}
	\email{lalitkvashisht@gmail.com}

\begin{abstract}
We establish several sufficient conditions ensuring that a perturbed family of operators in a given Hilbert-Schmidt frame for a separable Hilbert space remains a Hilbert-Schmidt frame. The perturbation criteria are formulated through  Nagy-type and Pollard-Hilding-type inequalities, yielding explicit estimates for the resulting frame bounds.   Several examples are presented to illustrate the applicability of the results.
\end{abstract}

\subjclass[2020]{42C15,  42C30,  43A32, 47B02}

\keywords{Bessel sequence; Frames; Compact opertor, Perturbation.\\
The research of Jyoti is supported by the  WISE-PDF research grant of WISE--KIRAN Division, Department of Science and Technology (DST), Government of India (Grant No.: DST/WISE-PDF/PM-6/2023(G)). Ruchi is supported by the National Board for Higher Mathematics (NBHM), Grant No.: 0203/23/2024/ R $\&$ D-II/647. Lalit  Kumar Vashisht is  supported by the Faculty Research Programme Grant-IoE, University of Delhi \ (Grant No.: Ref. No./IoE/2025-26/12/FRP).\\
$^*$Corresponding author}

\maketitle

\baselineskip15pt

\section{Introduction}
Frames in Hilbert spaces were originally motivated by the work of Gabor \cite{G46} on signal decomposition and were formally introduced by Duffin and Schaeffer \cite{DS} in connection with nonharmonic Fourier series. Since then, frame theory has developed into an active area of research with applications in harmonic analysis, signal processing, sampling theory, and operator theory; see \cite{BHan,H11,Young,Zpelle}. Let $\mathcal{H}$ be a separable Hilbert space. A sequence $\{x_i\}_{i \in I}\subseteq \mathcal{H}$ is called a frame for $\mathcal{H}$ if there exist positive real numbers $a$ and $b$  such that
\begin{align}\label{b01.1}
a \|x\|^2\leq  \sum_{i \in I}|\langle x, x_i\rangle|^2 \leq b \|x\|^2 \ \text{for all} \ x \in \mathcal{H}.
\end{align}
The numbers $a$ and $b$ are referred to as the lower and upper frame bounds, respectively. If only the upper inequality in \eqref{b01.1} is satisfied, then $\{x_i\}_{i \in I}$   is  called a Bessel sequence with Bessel bound $b$. For a frame
$\{x_i\}_{i \in I}$, the operator $S: \mathcal{H} \rightarrow \mathcal{H}$ defined by $Sx = \sum_{i \in I}\langle x, x_i\rangle x_i$ is called the frame operator. It is well known that $S$ is bounded, positive, and invertible. Consequently, every $x\in\mathcal{H}$ admits the reconstruction formula: $x = SS^{-1}x =\sum\limits_{i \in I} \langle x, S^{-1}x_i \rangle x_i$.

\subsection{Related work}
Several generalizations of classical frames have been introduced over the years. In particular, Sun \cite{WsunI} introduced the notion of $g$-frames by considering families of bounded linear operators acting from a Hilbert space into closed subspaces of another Hilbert space. More precisely, let $\{K_i\}_{i \in I}$ be a family of closed subspaces of a separable Hilbert space $\mathcal{K}$. A collection $\{L_i\}_{i \in I}$ of bounded linear operators from $\mathcal{H}$ into $K_i$ is called a $g$-frame for $\mathcal{H}$ with respect to $\{K_i\}_{i \in I}$ if there exist constants $\alpha,\beta>0$ such that
\begin{align*}
\alpha \|x\|^2 \leq \sum\limits_{i \in I}\|L_i x\|^2 \leq \beta \|x\|^2, \  x \in \mathcal{H}.
\end{align*}
Perturbation results for $g$-frames were also established by Sun in \cite{WsunII}. The interaction between frame theory and operator theory has led to the study of various operator-valued frame systems. Koo and Lim \cite{Yoo} investigated Schatten $p$-class operators through frame-theoretic techniques, while von Neumann-Schatten $p$-frames in Banach spaces were studied in \cite{SA}. Relations between Hilbert-Schmidt operators and frames were discussed in \cite{Balaza}. Standard references for the theory of Hilbert-Schmidt operators include the books by Schatten \cite{SCHI} and Simon \cite{B.simon}.

Hilbert-Schmidt frames, consisting of bounded linear operators from a Hilbert space $\mathcal{H}$ into the Hilbert-Schmidt class $\mathcal{C}_2(\mathcal{K})$, provide a natural operator-valued extension of classical frames. These frames are closely related to $g$-frames and matrix-valued frame systems and have been studied from several viewpoints. Hilbert-Schmidt frames for the class $\mathcal{C}_2(\mathcal{K})$ were investigated in \cite{J}, while Hilbert-Schmidt frames and Hilbert-Schmidt Riesz bases associated with tensor products of Hilbert spaces were studied in \cite{JVPDF1}. Hilbert-Schmidt frames for separable Hilbert spaces, where the lower frame condition is controlled by a bounded linear operator were studied in \cite{JV23}. Duality properties for Hilbert-Schmidt frames can be seen in \cite{WZhang}.

The stability of frame systems under perturbations has attracted significant attention due to its importance in applications; see \cite{GossonI, GossonII, H11, Pert1,  Lkhari, Young}. Perturbation theory for frames plays a central role in signal processing \cite{BHan,H11}, quantum physics \cite{GossonI}, time-frequency analysis \cite{KG}, distributed signal processing \cite{Bemrose, DVV17, DV17ar}, and duality theory \cite{H11,Young}. Classical perturbation results originate from the Paley-Wiener stability theorem for bases in Hilbert spaces; see \cite{H11}. Various Paley-Wiener type perturbation results for discrete frames can be found in \cite{CK}, while stability conditions for Gabor and wavelet frames were established in \cite{FZUS}. More recently, perturbation problems for frames associated with Weyl-Heisenberg and affine systems were studied in \cite{DivI}. Further accounts of perturbation theory for frames are available in \cite{GossonII,H11,Young}.

Recall that a  sequence of vectors  $\{x_{n}\}_{n=1}^{\infty}$  in a Hilbert space $\mathcal{H}$ is said to be \emph{complete} if the closure of its span equals  $\mathcal{H}$, that is, $\overline{\text{span}}\{x_{n}\}_{n=1}^{\infty} = \mathcal{H}$.
In \cite{P}, Pollard, proved a Paley-Wiener-type stability result for complete sequences in Hilbert spaces.  Hilding, in \cite{H}, studied the  Paley-Wiener-type stability result for completeness of a sequence  in terms of a linear transformation on Hilbert spaces. A sequence in a Hilbert space is called a basic sequence if it is a basis for its closed linear span.  Retherford \cite{R} introduced Nagy-type and  Pollard--Hilding-type stability conditions for basic sequences and completeness of sequences in complete linear metric spaces and normed spaces. These are   generalized  Paley-Wiener-type stability conditions for completeness of sequences in Hilbert spaces. Most recently, the authors of \cite{RV} showed that the Nagy-type and  Pollard--Hilding-type stability conditions for frames have potential applications in dynamical sampling. They showed in \cite{RV} that the initial states in a homogeneous dynamical system can be stably recovered under the Nagy-type and  Pollard--Hilding-type stability conditions for frames in separable Hilbert spaces.  Recently, in \cite{JVPDF2}, the perturbation stability of Hilbert-Schmidt frames under structured modifications was investigated. The authors established sufficient conditions guaranteeing preservation of the Hilbert-Schmidt frame property under finite and infinite replacements of frame elements. However, nonlinear perturbation conditions of Nagy-type and Pollard-Hilding-type have not yet been studied in the setting of Hilbert-Schmidt frames to the best of the authors’ knowledge.

Motivated by this observation, the present paper develops perturbation results for Hilbert-Schmidt frames under nonlinear stability conditions inspired by the classical Nagy and Pollard-Hilding perturbation frameworks. Sufficient conditions are obtained to ensure preservation of the Hilbert-Schmidt frame property together with explicit estimates for the associated frame bounds. The results obtained in this paper demonstrate that the stability of Hilbert-Schmidt frames can be established under a broad class of nonlinear perturbation estimates extending beyond the classical additive perturbation conditions.

\subsection{Structure of the paper}
To make the paper self-contained, Section \ref{sec2} contains the notation, definitions, and preliminary results used throughout the paper. Section \ref{seci} is devoted to Nagy-type perturbation results for Hilbert-Schmidt frames, where Theorem \ref{gp2} gives sufficient conditions for a family of operators satisfying a Nagy-type inequality to constitute an Hilbert-Schmidt frame, together with explicit estimates for the corresponding frame bounds and Theorem \ref{th2} derives an alternative Nagy-type perturbation criterion in terms of the associated pre-frame operators. Section \ref{secp} presents Pollard-Hilding-type perturbation results for Hilbert-Schmidt frames. Following the framework developed for the Nagy-type perturbation results, Theorem \ref{gp3} and Theorem \ref{th3} establish perturbation criteria formulated in terms of Pollard-Hilding-type inequalities. The paper concludes with Theorem \ref{pq} together with its pre-frame operator analogue, Theorem \ref{pq1}, establishing perturbation criteria based on a different class of mixed-power inequalities.

\section{Preliminaries}\label{sec2}
Throughout, we shall let $I$ a countable indexing set. Let $\mathfrak{B}(X, Y)$ denote the space of bounded linear operators from a normed space $X$ into a normed space $Y$.  If $X=Y$, then $\mathfrak{B}(X, Y) = \mathfrak{B}(X)$.
Let  $\{e_i\}_{i \in I}$  be  an orthonormal basis for  a separable Hilbert space $\mathcal{K}$. We denote by $\mathcal{C}_2(\mathcal{K})$ the Hilbert-Schimdt class, which is a collection of all compact operators $T$ acting on $\mathcal{K}$ such that $\sum\limits_{i \in I} \|Te_i\|^2 < \infty$. The space $\mathcal{C}_2(\mathcal{K})$ is a Banach space with respect to norm $\|\cdot\|_2$ defined as
\begin{align*}
\|T\|_2= \Big(\text{\textbf{trace}}(T^* T)\Big)^{1/2} =\Big(\sum\limits_{i \in I} \| Te_i\|^2\Big)^{1/2}, \ T \in \mathcal{C}_2(\mathcal{K}).
\end{align*}
 The space $\mathcal{C}_2(\mathcal{K})$ is a Hilbert space with respect to the inner product $[ \cdot, \cdot]_{\textbf{tr}}$ defined as
\begin{align*}
[ T, S ]_{\textbf{tr}}= \text{\textbf{trace}}(S^* T)=\sum_{i \in I} \langle T e_i, S e_i \rangle , \ T, S \in \mathcal{C}_2(\mathcal{K}).
\end{align*}
The space $\bigoplus\limits_{i \in I} \mathcal{C}_2(\mathcal{K})= \Big\{\{A_i\}_{i \in I} \subseteq  \mathcal{C}_2(\mathcal{K}):  \Big(\sum\limits_{i \in I}\|A_i \|_2^2\Big)^{1/2}< \infty\Big\}$
 is a Hilbert space with respect to the inner product given by
\begin{align*}
\big\langle \{A_i\}_{i \in I}, \{B_i\}_{i \in I} \big \rangle = \sum\limits_{i \in I} [ A_i,B_i ]_{\textbf{tr}}.
\end{align*}
Let   $ x$,  $y \in \mathcal{K}$. The map  $x \otimes y : \mathcal{K} \rightarrow \mathcal{K}$  given by
$(x \otimes y )(u)= \langle u, y \rangle x, \ u \in \mathcal{K}$, is an element of  $\mathfrak{B}(\mathcal{K})$ with
$\|x \otimes y\|= \|x\| \|y\|$. Moreover, $x \otimes y \in \mathcal{C}_2(\mathcal{K})$ with $\|x \otimes y\|_2= \|x\|\|y\|$.
For any  $ x, y, u, v \in \mathcal{K}$ and $U \in \mathcal{B}(\mathcal{K})$,  the following properties hold:
\begin{enumerate}[$(i)$]
\item $(x \otimes y)^*= y \otimes x$.
\item $\textbf{trace}(x \otimes y)=\langle x, y \rangle$.
\item $[ x \otimes y, u \otimes v ]_{\textbf{tr}}=\langle x, u \rangle \langle v, y \rangle$.
\item $(x \otimes y) (u \otimes v)= \langle u, y \rangle (x \otimes v)$.
\item $ U (x \otimes y)= Ux \otimes y$ and $(x \otimes y)U=x \otimes U^*y$.
\end{enumerate}

\subsection{Hilbert-Schmidt frames}
Here we recall basic notions related to Hilbert-Schmidt frames.
\begin{defn}\cite{SA}
A sequence $\{G_i\}_{i \in I}$ of bounded linear operators from a Hilbert space $\mathcal{H}$ into the Hilbert-Schmidt class
$\mathcal{C}_2(\mathcal{K})$ is called a Hilbert-Schmidt frame (HS-frame) for $\mathcal{H}$ with respect to $\mathcal{K}$ if there exist constants $A, B > 0$ such that
\begin{align}\label{eq3.2mf}
A \|x\|^2 \leq \sum_{i \in I} \|G_i x\|_2^2 \leq B \|x\|^2 \quad \text{for all} \ x \in \mathcal{H}.
\end{align}
\end{defn}
The scalars $A$ and $B$ are called frame bounds of $\{G_i\}_{i \in I}$. The HS-frame $\{G_i\}_{i \in I}$ is called a Parseval HS-frame if it is possible to choose $A=B=1$.
If only the upper inequality in \eqref{eq3.2mf} is satisfied,   then we say that $\{G_i\}_{i \in I}$  is  a Hilbert-Schmidt Bessel  (HS-Bessel) sequence with Bessel bound $B$.
For a given HS-Bessel sequence $\{G_i\}_{i \in I}$ in  $\mathcal{H}$, the map  $\mathcal{T}_G: \bigoplus\limits_{i \in I} \mathcal{C}_2(\mathcal{K}) \rightarrow \mathcal{H}$ defined by
\begin{align*}
 \mathcal{T}_G \big(\{A_i \}_{i \in I} \big) =  \sum_{i \in I} G_i^* A_i,  \ \{A_i \}_{i \in I} \in \bigoplus\limits_{i \in I} \mathcal{C}_2(\mathcal{K})
\end{align*}
is called the \emph{pre-frame operator} of   $\{G_i\}_{i \in I}$.
The \emph{analysis operator} $\mathcal{T}_G^*:\mathcal{H} \rightarrow    \bigoplus\limits_{i \in I} \mathcal{C}_2(\mathcal{K})$ is given  by
\begin{align*}
 \mathcal{T}_G^*  x  =   \big\{ G_i x\big\}_{i \in I},  \ x \in \mathcal{H}.
\end{align*}
The pre-frame operator and analysis operator are  linear and bounded with $\|\mathcal{T}_G\|=\|\mathcal{T}_G^*\|\leq \sqrt{B}$. The \emph{frame operator} of  $\{G_i\}_{i \in I}$ is the composition $ \mathcal{S} = \mathcal{T}_G\mathcal{T}_G^*: \mathcal{H} \rightarrow   \mathcal{H}$ given by
\begin{align*}
 \mathcal{S}_G x =  \sum_{i \in I} G_i^* G_i x,  \ x \in \mathcal{H}.
\end{align*}
The frame operator $\mathcal{S}_G$ is bounded and linear. Further, if  $\{G_i\}_{i \in I}$ is an HS-frame, then the frame operator is positive and invertible  on $\mathcal{H}$. Thus, we have the following decomposition formulas:
\begin{align}\label{fo}
x=\sum_{i \in I} G_i^* G_i \mathcal{S}_G^{-1}x=\sum_{i \in I} \mathcal{S}_G^{-1}G_i^* G_i x, \ x \in \mathcal{H}.
\end{align}

\begin{defn}
Let $\{G_i\}_{i \in I}$ be an HS-frame for $\mathcal{H}$ with respect to  $\mathcal{K}$. An HS-frame $\{\widetilde{G}_i\}_{i \in I}$ is called a dual  of $\{G_i\}_{i \in I}$ if
\begin{align*}
\sum_{i \in I} G_i^* \widetilde{G}_i x=\sum_{i \in I} \widetilde{G}_i^* G_i x=x, \ x \in \mathcal{H}.
\end{align*}
It is well-known that if $\{G_i\}_{i \in I}$ is an HS-frame with frame bounds $A, B$, then $\{G_i\mathcal{S}_G^{-1}\}_{i \in I}$ is an HS-frame with frame bounds $B^{-1}, A^{-1}$. Moreover, $\{G_i\mathcal{S}_G^{-1}\}_{i \in I}$ a dual of $\{G_i\}_{i \in I}$ due to \eqref{fo} and is known as the canonical dual of $\{G_i\}_{i \in I}$.
\end{defn}

\subsection{Auxiliary results}

\begin{lem}\cite{CK}\label{L1}
Let $\mathcal{X}$ be a Banach space and $U : \mathcal{X} \rightarrow \mathcal{X}$ be a linear operator. Assume that there exist constants $\lambda_{1}, \lambda_{2} \in [0,1)$ such that
\begin{align*}
\|Ux-x\| \leq \lambda_{1} \|x\| + \lambda_{2} \|Ux\|, \ x \in \mathcal{X}.
\end{align*}
Then $U$ is bounded and invertible. Moreover,
\begin{align*}
\frac{1-\lambda_1}{1+ \lambda_2} \|x\| \leq \|Ux\| \leq \frac{1+\lambda_1}{1- \lambda_2} \|x\| \ \ \text{and} \ \
 \frac{1-\lambda_2}{1+ \lambda_1}\|x\| \leq \|U^{-1}x\| \leq \frac{1+\lambda_2}{1- \lambda_1} \|x\|,  \ x \in \mathcal{X}.
\end{align*}
\end{lem}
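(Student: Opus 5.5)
The plan is to follow the classical Casazza--Christensen argument, first deriving the two-sided norm estimates for $U$ directly from the hypothesis and then obtaining boundedness and invertibility from them.

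\emph{Step 1 (upper bound).} By the triangle inequality and the hypothesis,
\begin{align*}
\|Ux\| \leq \|Ux - x\| + \|x\| \leq (1+\lambda_1)\|x\| + \lambda_2\|Ux\|.
\end{align*}
Since $\lambda_2<1$, this rearranges to $\|Ux\|\leq \frac{1+\lambda_1}{1-\lambda_2}\|x\|$. In particular, $U$ is bounded.

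\emph{Step 2 (lower bound).} Similarly,
\begin{align*}
\|x\| \leq \|x-Ux\| + \|Ux\| \leq \lambda_1\|x\| + (1+\lambda_2)\|Ux\|.
\end{align*}
Since $\lambda_1<1$, this gives $\|Ux\|\geq \frac{1-\lambda_1}{1+\lambda_2}\|x\|$. Hence $U$ is injective and has closed range, because a Cauchy sequence $Ux_n$ forces $x_n$ to be Cauchy and we can use boundedness.

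\emph{Step 3 (surjectivity).} This is the main obstacle, since injectivity and closed range alone do not give surjectivity. I would use a homotopy/continuity argument. Set $U_t = (1-t)I + tU$ for $t\in[0,1]$. Then $U_t x - x = t(Ux-x)$, so
\begin{align*}
\|U_tx - x\| \leq t\lambda_1\|x\| + t\lambda_2\|Ux\|.
\end{align*}
The term $\|Ux\|$ needs care: we have $tUx = U_tx - (1-t)x$. So for $t>0$,
\begin{align*}
t\lambda_2\|Ux\| \leq \lambda_2\|U_tx\| + \lambda_2(1-t)\|x\|.
\end{align*}
Combining,
\begin{align*}
\|U_tx-x\| \leq (t\lambda_1+(1-t)\lambda_2)\|x\| + \lambda_2\|U_tx\|,
\end{align*}
where both coefficients lie in $[0,1)$. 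Applying Step 2 to $U_t$ gives a uniform lower bound $\|U_tx\|\geq c\|x\|$ with
\begin{align*}
c = \frac{1-\max(\lambda_1,\lambda_2)}{1+\lambda_2} > 0.
\end{align*}
Each $U_t$ is bounded by Step 1, and $t\mapsto U_t$ is norm-continuous. The set of $t$ for which $U_t$ is invertible contains $0$ and is open. It is also closed: if $U_{t_n}$ is invertible with $t_n\to t$, then $\|U_{t_n}^{-1}\|\leq 1/c$ uniformly, and a Neumann series perturbation of $U_{t_n}$ inverts $U_t$ once $\|U_t - U_{t_n}\|<c$. By connectedness of $[0,1]$, $U_1=U$ is invertible. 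Alternatively, one can note that $U$ is bounded below, so invertibility is equivalent to surjectivity, and surjectivity is stable under small perturbations with uniform lower bounds.

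\emph{Step 4 (bounds for $U^{-1}$).} Substitute $x=U^{-1}y$ into the bounds of Steps 1 and 2. This gives
\begin{align*}
\frac{1-\lambda_2}{1+\lambda_1}\|y\| \leq \|U^{-1}y\| \leq \frac{1+\lambda_2}{1-\lambda_1}\|y\|,
\end{align*}
which completes the proof.
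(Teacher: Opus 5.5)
Your proof is correct and complete. The two triangle-inequality estimates give boundedness and the lower bound. Your computation showing that $U_t=(1-t)I+tU$ satisfies the hypothesis with constants $t\lambda_1+(1-t)\lambda_2$ and $\lambda_2$ is right, and it yields the uniform bound $\|U_tx\|\geq \frac{1-\max\{\lambda_1,\lambda_2\}}{1+\lambda_2}\|x\|$, which lets the continuity argument carry invertibility from $U_0=I$ to $U_1=U$.

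The paper gives no proof of its own; it simply quotes the lemma from \cite{CK}. Your argument is essentially the standard method-of-continuity proof of this Casazza--Christensen lemma. One small remark: the closed-range observation in Step 2 is not needed. The uniform lower bound $c$ alone shows that $U_t$ is invertible whenever $U_s$ is and $|t-s|\,\|U-I\|<c$, so finitely many such steps reach $t=1$.
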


\begin{thm}\cite{CD}\label{Young} \ \textbf{Young's Inequality:}
Let $1 < p, q < \infty$ be such that $\frac{1}{p}+\frac{1}{q} = 1$. Then, for all $a, b > 0$,
$ab \leq \frac{a^p}{p}+\frac{b^q}{q}$.
In particular, for $p = q = 2$ and any $\epsilon >0$, substituting  $\sqrt{2\epsilon}a$ for $a$ and
$\frac{b}{\sqrt{2\epsilon}}$ for $b$ yields the following $\epsilon$-version of Young’s inequality:
\begin{align}\label{y2}
ab \leq \epsilon a^2+ \frac{1}{4 \epsilon} b^{2}.
\end{align}
\end{thm}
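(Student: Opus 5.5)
The statement is Young's inequality together with its $\epsilon$-version \eqref{y2}. The general inequality follows from the concavity of the logarithm. For $a,b>0$ we write
\begin{align*}
\log\Big(\frac{a^p}{p}+\frac{b^q}{q}\Big) \geq \frac{1}{p}\log a^p + \frac{1}{q}\log b^q = \log(ab),
\end{align*}
since $\frac1p+\frac1q=1$ and $\log$ is concave on $(0,\infty)$. Exponentiating, which is monotone, gives $ab \leq \frac{a^p}{p}+\frac{b^q}{q}$. As an alternative, one could fix $b$ and minimize $f(a)=\frac{a^p}{p}+\frac{b^q}{q}-ab$ over $a>0$. The unique critical point is $a^{p-1}=b$, and there $f=0$, because $a^p=a\cdot a^{p-1}=ab$ and $b^q=a^{(p-1)q}=a^p$. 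Since $f$ is convex, this critical point is a global minimum.

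For the $\epsilon$-version, take $p=q=2$, so the inequality reads $ab\le \frac{a^2}{2}+\frac{b^2}{2}$. This is simply $(a-b)^2\ge 0$. Applying it to $\sqrt{2\epsilon}\,a$ and $b/\sqrt{2\epsilon}$, whose product is still $ab$, gives
\begin{align*}
ab=\big(\sqrt{2\epsilon}\,a\big)\Big(\frac{b}{\sqrt{2\epsilon}}\Big)\leq \frac{2\epsilon a^2}{2}+\frac{b^2}{2\cdot 2\epsilon}=\epsilon a^2+\frac{1}{4\epsilon}b^2,
\end{align*}
which is \eqref{y2}.

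There is no real obstacle here. The only point needing care is the weighted concavity step, that is, Jensen's inequality for $\log$ with weights $\frac1p,\frac1q$ summing to one.
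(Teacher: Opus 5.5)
Your proof is correct, and for the $\epsilon$-version it is exactly the paper's derivation: the substitution of $\sqrt{2\epsilon}\,a$ and $b/\sqrt{2\epsilon}$ into the case $p=q=2$ that gives \eqref{y2}. The paper only cites the general inequality without proving it, so your log-concavity (or convexity and critical-point) argument supplies a standard proof of the part the paper takes as given.
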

	
\begin{thm}\cite{H}\label{Theorem 2.8}
For any $x \geq 0, y \geq 0$, we have $x^{k}+ y^{k} \leq c (x+y)^{k}$, where
\begin{align*}
c=\begin{cases}
1, &   k \geq 1,\\
2^{1-k}, &  k \leq 1 .
\end{cases}
\end{align*}
\end{thm}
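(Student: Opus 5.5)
This is the elementary inequality $x^k+y^k\le c(x+y)^k$ for $x,y\ge 0$, with $c=1$ if $k\ge1$ and $c=2^{1-k}$ if $k\le1$. I read the intended range as $k>0$; for $k\le 0$ the left side is undefined or infinite when a variable vanishes. The plan is to reduce by homogeneity. If $x+y=0$ both sides vanish. Otherwise, divide by $(x+y)^k$ and set $t=x/(x+y)\in[0,1]$. The claim then becomes
\begin{align*}
\varphi(t):=t^k+(1-t)^k\le c, \quad t\in[0,1].
\end{align*}
So it suffices to compute $\max_{[0,1]}\varphi$ in each regime of $k$.

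\emph{Case $k\ge1$.} Since $0\le t\le1$, we have $t^k\le t$ and $(1-t)^k\le 1-t$. Adding gives $\varphi(t)\le t+(1-t)=1$, which yields $c=1$. Equivalently, $u\mapsto u^k$ is superadditive on $[0,\infty)$ for $k\ge 1$.

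\emph{Case $0<k\le1$.} Here $u\mapsto u^k$ is concave on $[0,\infty)$. Jensen's inequality with weights $\tfrac12,\tfrac12$ gives
\begin{align*}
\tfrac12\big(t^k+(1-t)^k\big)\le \Big(\tfrac{t+(1-t)}{2}\Big)^k=2^{-k}.
\end{align*}
Hence $\varphi(t)\le 2^{1-k}$. Undoing the normalization gives $x^k+y^k\le 2^{1-k}(x+y)^k$. Alternatively, one can check that $\varphi'(t)=k\big(t^{k-1}-(1-t)^{k-1}\big)$ vanishes only at $t=\tfrac12$, where $\varphi=2^{1-k}$, while $\varphi(0)=\varphi(1)=1\le 2^{1-k}$.

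There is no real obstacle here. The only point needing care is the boundary case $x+y=0$, and possibly $t\in\{0,1\}$ when $k$ is small. I would treat it separately before normalizing. I would also note that at $k=1$ both formulas give $c=1$, so the overlap of the cases is consistent.
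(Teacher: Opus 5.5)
Your proof is correct and complete. The paper gives no proof of this inequality; it quotes it from Hilding's paper \cite{H}, so there is no competing route to compare. Your argument is the standard self-contained one: reduce by homogeneity to $\varphi(t)=t^k+(1-t)^k$ on $[0,1]$, then use $t^k\le t$ for $k\ge 1$ and midpoint concavity (Jensen) for $0<k\le 1$.

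Your restriction to $k>0$ is needed for more than the boundary values. For $k<0$ the map $u\mapsto u^k$ is convex on $(0,\infty)$, so Jensen reverses. The inequality with $c=2^{1-k}$ then fails even for strictly positive $x,y$. For instance, $k=-1$, $x=1$, $y=\delta\in(0,\tfrac13)$ gives $1+\delta^{-1}>4>4/(1+\delta)$. So the statement's ``$k\le 1$'' must be read as $0<k\le 1$. (At $k=0$ it holds trivially with the convention $0^0=1$.) This does no harm to the paper, since Theorems \ref{gp3} and \ref{th3} only apply the inequality for $k>0$.

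One small point on your derivative alternative: the claim that $\varphi'$ vanishes only at $t=\tfrac12$ requires $k\neq 1$. At $k=1$ we have $\varphi\equiv 1$, which is still consistent with the bound.
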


The following classical stability notions for sequences in Banach and Hilbert spaces are closely related to Paley-Wiener type perturbation theory and motivate the perturbation inequalities considered later in this paper.

\begin{defn}\cite{R}
Two sequences $\{x_i\}_{i=1}^\infty$ and $\{y_i\}_{i=1}^\infty$ in a normed space $\mathcal{X}$ are said to satisfy
\begin{enumerate}
\item the \emph{Pollard-Hilding stability condition} if for each positive real number $k$, there exist constants $\lambda_1,\lambda_2$ with
$0\leq \lambda_j < \min\{1,2^{1-\frac{1}{k}}\}, \ j=1,2,$ such that
\begin{align*}
\Big\|\sum_{i=1}^{n} a_i(x_i-y_i)\Big\|\leq\left(\lambda_1\Big\|\sum_{i=1}^{n} a_i x_i\Big\|^k+\lambda_2\Big\|\sum_{i=1}^{n} a_i y_i\Big\|^k\right)^{1/k}
\end{align*}
for all finite scalar sequences $\{a_i\}_{i=1}^{n}$.

\item the \emph{Nagy stability condition} if there exist constants $\lambda,\nu \in [0,1)$ and a constant $\mu \geq 0$ with
$\mu^2 \leq (1-\lambda)(1-\nu)$, such that
\begin{align*}
\Big\|\sum_{i=1}^{n} a_i(x_i-y_i)\Big\|^2\leq\lambda\Big\|\sum_{i=1}^{n} a_ix_i\Big\|^2
+\mu\Big\|\sum_{i=1}^{n} a_i x_i\Big\|\Big\|\sum_{i=1}^{n} a_i y_i\Big\|
+\nu\Big\|\sum_{i=1}^{n} a_i y_i\Big\|^2
\end{align*}
for all finite scalar sequences $\{a_i\}_{i=1}^{n}$.
\end{enumerate}
\end{defn}

In \cite{P}, Pollard established a generalized Paley-Wiener type stability result for complete sequences in separable Hilbert spaces in the special case $\mu=0$, while related variants were subsequently studied by Hilding in \cite{H}. The perturbation conditions considered in the present work may be regarded as Hilbert-Schmidt frame analogues of these classical stability notions.

\section{Nagy-Type Stability of HS-Frames}\label{seci}

In this section, we establish Nagy-type stability results for HS-frames. We begin with a perturbation theorem in which the perturbation estimate contains a mixed term involving both the original and perturbed families. By controlling this mixed term through Young's inequality, we obtain sufficient conditions ensuring that the perturbed family remains an HS-frame.

\begin{thm}\label{gp2}
Let $\{G_i\}_{i\in I}$ be an HS-frame for $\mathcal{H}$ with respect to $\mathcal{K}$ with bounds $A,B$, and let $\{H_i\}_{i\in I}$ be a family of operators from $\mathcal{H}$ into $\mathcal{C}_2(\mathcal{K})$. Suppose that there exist constants $\lambda,\mu,\nu\geq 0$ such that for every finite subset $J\subseteq I$ and $x\in\mathcal{H}$,
\begin{align}\label{gi}
\Big\|\sum_{i\in J}\big(G_i^*G_i x-H_i^*H_i x\big)\Big\|^2
\leq\lambda\Big\|\sum_{i\in J}G_i^*G_i x\Big\|^2+\mu\Big\|\sum_{i\in J}G_i^*G_i x\Big\|\Big\|\sum_{i\in J}H_i^*H_i x\Big\|+\nu\Big\|\sum_{i\in J}H_i^*H_i x\Big\|^2.
\end{align}
If there exists an $\epsilon>0$ such that $\max\left\{\lambda+\mu\epsilon, \nu+\frac{\mu}{4\epsilon}\right\}<1$,
then $\{H_i\}_{i\in I}$ is an HS-frame for $\mathcal H$ with respect to $\mathcal K$ with frame bounds
$A\left(\frac{1-\sqrt{\lambda+\mu\epsilon}}{1+\sqrt{\nu+\frac{\mu}{4\epsilon}}}\right)$
and $B\left(\frac{1+\sqrt{\lambda+\mu\epsilon}}{1-\sqrt{\nu+\frac{\mu}{4\epsilon}}}\right)$.
\end{thm}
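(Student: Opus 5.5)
The plan is to reduce \eqref{gi} to a Casazza--Christensen type inequality as in Lemma \ref{L1}. We then compare the frame operators $\mathcal{S}_G$ and $\mathcal{S}_H$, and finally turn norm estimates for $\mathcal{S}_H$ into bounds for $\sum_{i\in I}\|H_ix\|_2^2=\langle \mathcal{S}_Hx,x\rangle$.

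\textbf{Step 1: a linear inequality for finite sums.} For a finite $J\subseteq I$ and $x\in\mathcal H$ put $u_J=\sum_{i\in J}G_i^*G_ix$ and $v_J=\sum_{i\in J}H_i^*H_ix$. Applying the $\epsilon$-version \eqref{y2} of Young's inequality (Theorem \ref{Young}) to the mixed term gives
\begin{align*}
\mu\|u_J\|\|v_J\|\leq \mu\epsilon\|u_J\|^2+\tfrac{\mu}{4\epsilon}\|v_J\|^2 .
\end{align*}
Set $\alpha=\sqrt{\lambda+\mu\epsilon}$ and $\beta=\sqrt{\nu+\mu/(4\epsilon)}$; by hypothesis $\alpha,\beta<1$. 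Then \eqref{gi} yields $\|u_J-v_J\|^2\le \alpha^2\|u_J\|^2+\beta^2\|v_J\|^2\le(\alpha\|u_J\|+\beta\|v_J\|)^2$, that is,
\begin{align*}
\|u_J-v_J\|\le\alpha\|u_J\|+\beta\|v_J\|.
\end{align*}
The triangle inequality then gives the two-sided estimate
\begin{align*}
\frac{1-\alpha}{1+\beta}\|u_J\|\le\|v_J\|\le\frac{1+\alpha}{1-\beta}\|u_J\| .
\end{align*}
This is the same mechanism as in Lemma \ref{L1}.

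\textbf{Step 2: convergence and the operator $\mathcal{S}_H$.} Since $\sum_i G_i^*G_ix$ converges unconditionally to $\mathcal S_Gx$, the partial sums $u_J$ are Cauchy along finite subsets. Applying the upper estimate of Step 1 to finite sets $J$, which are exactly the differences of partial sums, shows that the $v_J$ are Cauchy as well. Hence $\mathcal S_Hx:=\sum_iH_i^*H_ix$ converges unconditionally. Passing to the limit gives
\begin{align*}
\frac{1-\alpha}{1+\beta}\|\mathcal S_Gx\|\le\|\mathcal S_Hx\|\le\frac{1+\alpha}{1-\beta}\|\mathcal S_Gx\| .
\end{align*}
Consequently $\mathcal S_H$ is a linear operator, bounded by $B\frac{1+\alpha}{1-\beta}$, because $\|\mathcal S_G\|\le B$. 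It is also self-adjoint and positive, since each $\langle v_Jx,x\rangle=\sum_{i\in J}\|H_ix\|_2^2\geq0$. In particular $\sum_{i\in I}\|H_ix\|_2^2=\langle\mathcal S_Hx,x\rangle$ is finite. Here I tacitly use that each $H_i$ is bounded, so that $H_i^*$ makes sense; this is implicit in the hypothesis.

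\textbf{Step 3: frame bounds.} The upper bound is immediate:
\begin{align*}
\langle\mathcal S_Hx,x\rangle\le\|\mathcal S_Hx\|\|x\|\le B\frac{1+\alpha}{1-\beta}\|x\|^2 .
\end{align*}
The lower bound is the main obstacle, because Step 2 only controls $\|\mathcal S_Hx\|$ and not the quadratic form. Since $\|\mathcal S_Gx\|\ge A\|x\|$ (as $\mathcal S_G\ge A\,I$), we obtain $\|\mathcal S_Hx\|\ge A\frac{1-\alpha}{1+\beta}\|x\|$. So $\mathcal S_H$ is injective with closed range, and being self-adjoint it is invertible. For a positive invertible operator, the bottom of the spectrum equals $\inf_{\|x\|=1}\|\mathcal S_Hx\|$, for instance via the spectral theorem, or via $\|\mathcal S_H^{1/2}x\|^2\ge\|\mathcal S_H^{-1/2}\|^{-2}\|x\|^2$ with $\|\mathcal S_H^{-1}\|\le\big(A\frac{1-\alpha}{1+\beta}\big)^{-1}$. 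Hence
\begin{align*}
\sum_{i\in I}\|H_ix\|_2^2=\langle\mathcal S_Hx,x\rangle\ge A\frac{1-\alpha}{1+\beta}\|x\|^2,
\end{align*}
which gives exactly the bounds stated in Theorem \ref{gp2}.
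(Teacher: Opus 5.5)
Your proof is correct and follows the paper's route: Young's inequality gives $\|u_J-v_J\|\le\alpha\|u_J\|+\beta\|v_J\|$, and you then compare $\mathcal{S}_H$ with $\mathcal{S}_G$ to get the same bounds. The differences are minor, and in places your version is more careful. You obtain unconditional convergence of $\sum_{i}H_i^*H_ix$ from the Cauchy criterion on finite blocks, whereas the paper asserts it directly from uniformly bounded partial sums. You also get invertibility of $\mathcal{S}_H$ from $\|\mathcal{S}_Hx\|\ge A\frac{1-\alpha}{1+\beta}\|x\|$ together with self-adjointness, rather than applying Lemma~\ref{L1} to $\mathcal{S}_H\mathcal{S}_G^{-1}$. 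Finally, you justify explicitly why $\|\mathcal{S}_H^{-1}\|^{-1}$ is a lower frame bound, which the paper leaves implicit.
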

\begin{proof}
For any finite subset $J\subseteq I$ and $x \in \mathcal{H}$, we compute
\begin{align}\label{e1}
&\Big\|\sum_{i\in J}(G_i^*G_i x-H_i^*H_i x)\Big\|^2\notag\\
& \leq \lambda\Big\|\sum_{i\in J}G_i^*G_i x\Big\|^2+\mu\Big\|\sum_{i\in J}G_i^*G_i x\Big\|\Big\|\sum_{i\in J}H_i^*H_i x\Big\|+\nu\Big\|\sum_{i\in J}H_i^*H_i x\Big\|^2\notag\\
& \leq \lambda\Big\|\sum_{i\in J}G_i^*G_i x\Big\|^2+\mu\epsilon\Big\|\sum_{i\in J}G_i^*G_i x\Big\|^2 +\frac{\mu}{4\epsilon} \Big\|\sum_{i\in J}H_i^*H_i x\Big\|^2 +\nu\Big\|\sum_{i\in J}H_i^*H_i x\Big\|^2 \quad \text{(by \eqref{y2})}\notag\\
&=\left(\sqrt{\lambda+\mu\epsilon} \ \Big\|\sum_{i\in J}G_i^*G_i x\Big\|\right)^2+\left( \sqrt{\nu+\frac{\mu}{4\epsilon}} \ \Big\|\sum_{i\in J}H_i^*H_i x\Big\|\right)^2\notag\\
& \leq \left(\sqrt{\lambda+\mu\epsilon} \ \Big\|\sum_{i\in J}G_i^*G_i x\Big\| +\sqrt{\nu+\frac{\mu}{4\epsilon}} \ \Big\|\sum_{i\in J}H_i^*H_i x\Big\|\right)^2\notag\\
\intertext{that gives}
&\Big\|\sum_{i\in J}(G_i^*G_i x-H_i^*H_i x)\Big\|  \leq \sqrt{\lambda+\mu\epsilon} \ \Big\|\sum_{i\in J}G_i^*G_i x\Big\| +\sqrt{\nu+\frac{\mu}{4\epsilon}} \ \Big\|\sum_{i\in J}H_i^*H_i x\Big\|.
\end{align}
Now
\begin{align*}
\Big\|\sum_{i\in J}H_i^*H_i x\Big\|&\leq\Big\|\sum_{i\in J}(G_i^*G_i x-H_i^*H_i x)\Big\|+\Big\|\sum_{i\in J}G_i^*G_i x\Big\|\\
&\leq\left(1+\sqrt{\lambda+\mu\epsilon}\right)\Big\|\sum_{i\in J}G_i^*G_i x\Big\|+\sqrt{\nu+\frac{\mu}{4\epsilon}} \ \Big\|\sum_{i\in J}H_i^*H_i x\Big\|\\
\intertext{and}
\Big\|\sum_{i\in J}G_i^*G_i x\Big\| &=\sup_{\|y\|=1}\Big|\Big\langle\sum_{i\in J}G_i^*G_i x,y\Big\rangle\Big|
=\sup_{\|y\|=1}\Big|\sum_{i\in J}[ G_i x,G_i y]_{\textbf{tr}}\Big|
\leq \Big(\sum_{i\in J}\|G_i x\|_2^2\Big)^{1/2}\sup_{\|y\|=1}\Big(\sum_{i\in J}\|G_i y\|_2^2\Big)^{1/2}
\intertext{gives}
&\Big\|\sum_{i\in J}H_i^*H_i x\Big\|\leq B\left(\frac{1+\sqrt{\lambda+\mu\epsilon}}{1-\sqrt{\nu+\frac{\mu}{4\epsilon}}}\right)\|x\|, \ x \in \mathcal{H}.
\end{align*}
Therefore, $\sum\limits_{i\in I}H_i^*H_i x$ is unconditionally convergent for every $x\in\mathcal{H}$. Define $\mathcal{S}_H: \mathcal{H} \to \mathcal{H}$ as
\begin{align*}
\mathcal{S}_Hx=\sum_{i\in I}H_i^*H_i x,\ x\in \mathcal{H}.
\end{align*}
Then, $\mathcal{S}_H$ is a bounded operator. Moreover, for every $x \in \mathcal{H}$,
\begin{align*}
\sum_{i\in I}\|H_i x\|_2^2=\langle \mathcal{S}_H x,x\rangle\leq\|\mathcal{S}_H\|\|x\|^2 \leq B\left(\frac{1+\sqrt{\lambda+\mu\epsilon}}{1-\sqrt{\nu+\frac{\mu}{4\epsilon}}}\right) \|x\|^2,
\end{align*}
showing that $\{H_i\}_{i\in I}$ is an HS-Bessel sequence. Let $\mathcal{S}_G$ be the frame operator of $\{G_i\}_{i\in I}$. Passing to the limit over finite subsets $J \subseteq I$ in \eqref{e1}, we obtain
\begin{align*}
\|\mathcal{S}_Gx-\mathcal{S}_Hx\|\leq\sqrt{\lambda+\mu\epsilon} \ \|\mathcal{S}_Gx\|+\sqrt{\nu+\frac{\mu}{4\epsilon}}\ \|\mathcal{S}_Hx\|,\ x\in\mathcal H.
\end{align*}
Replacing $x$ by $\mathcal{S}_G^{-1}x$, we obtain
\begin{align*}
\|x-\mathcal{S}_H \mathcal{S}_G^{-1}x\|&\leq\sqrt{\lambda+\mu\epsilon} \ \|x\|+\sqrt{\nu+\frac{\mu}{4\epsilon}} \ \|\mathcal{S}_H \mathcal{S}_G^{-1}x\|.
\end{align*}
Since $\max\left\{\sqrt{\lambda+\mu\epsilon},\sqrt{\nu+\frac{\mu}{4\epsilon}}\right\}<1$, by Lemma~\ref{L1}, $\mathcal{S}_H \mathcal{S}_G^{-1}$ is invertible and therefore $\mathcal{S}_H$ is invertible and hence $\{H_i\}_{i\in I}$ is an HS-frame. Moreover,
\begin{align*}
\|\mathcal{S}_H^{-1}\|\leq\|\mathcal{S}_G^{-1}\|\|\mathcal{S}_G \mathcal{S}_H^{-1}\|\leq\frac{1+\sqrt{\nu+\frac{\mu}{4\epsilon}}}{A\left(1-\sqrt{\lambda+\mu\epsilon}\right)}
\intertext{and therefore}
\|\mathcal{S}_H^{-1}\|^{-1} \geq A\left(\frac{1-\sqrt{\lambda+\mu\epsilon}}{1+\sqrt{\nu+\frac{\mu}{4\epsilon}}}\right)
\end{align*}
gives the desired lower frame bound for $\{H_i\}_{i\in I}$. The proof is complete.
\end{proof}

\begin{rem}
The presence of the free parameter $\epsilon>0$ in Theorem \ref{gp2} allows some flexibility in the resulting frame bounds. By choosing $\epsilon$ appropriately, one can obtain the sharpest guaranteed frame bounds. Specifically, this is achieved by minimizing $\max\left\{\lambda+\mu\epsilon,\nu+\frac{\mu}{4\epsilon}\right\}$, which is attained when $\lambda+\mu\epsilon=\nu+\frac{\mu}{4\epsilon}$. Solving for $\epsilon$ yields
\begin{align*}
\epsilon=\frac{(\nu-\lambda)+\sqrt{(\nu-\lambda)^2+\mu^2}}{2\mu}.
\end{align*}
For this choice, $\lambda+\mu\epsilon=\nu+\frac{\mu}{4\epsilon}=c^2$, where
\begin{align*}
c=\sqrt{\frac{\lambda+\nu+\sqrt{(\nu-\lambda)^2+\mu^2}}{2}}.
\end{align*}
Hence, the frame bounds simplify to $A\left(\frac{1-c}{1+c}\right) \ \text{and}\ B\left(\frac{1+c}{1-c}\right)$. Thus the optimized frame bounds are determined by the single parameter $c\in[0,1)$. Moreover, as $c\to 0$,
\begin{align*}
A\left(\frac{1-c}{1+c}\right) \to A \ \text{and}\ B\left(\frac{1+c}{1-c}\right) \to B,
\end{align*}
showing that the optimized frame bounds approach the original frame bounds. Hence, for sufficiently small perturbations, the perturbed HS-frame inherits frame bounds that are close to those of the original HS-frame.
\end{rem}

We provide the following example to illustrate Theorem \ref{gp2}.
\begin{exa}\label{exnew}
Let $\mathcal{H} = \mathcal{K} = \ell^2(\mathbb{N})$, and let $\{e_i\}_{i \in \mathbb{N}}$ be the canonical orthonormal basis of $\ell^2(\mathbb{N})$. Define $G_i:\ell^2(\mathbb{N}) \to  \mathcal{C}_2(\ell^2(\mathbb{N}))$ as
\begin{align*}
G_i(x)=\langle x, e_i \rangle e_i \otimes e_i, \ i \in \mathbb{N}, \ x \in \ell^2(\mathbb{N}).
\end{align*}
Then,
\begin{align*}
\sum_{i \in \mathbb{N}} \| G_i(x) \|_2^2 =\sum_{i \in \mathbb{N}} \| \langle x, e_i \rangle e_i \otimes e_i \|_2^2=\sum_{i \in \mathbb{N}} | \langle x, e_i \rangle |^2=\|x\|^2, \ x \in \ell^2(\mathbb{N}).
\end{align*}
This shows that $\{G_i\}_{i \in \mathbb{N}}$ is a Parseval HS-frame for $\ell^2(\mathbb{N})$ with frame bounds $A=B=1$. \\
Define $H_i:\ell^2(\mathbb{N}) \to  \mathcal{C}_2(\ell^2(\mathbb{N}))$ as
\begin{align*}
H_i(x) = \sqrt{2}\langle x, e_i \rangle \, (e_i \otimes e_i) + \langle x, e_i \rangle \, (e_{i+1} \otimes e_i), \ i \in \mathbb{N}, \ x \in \ell^2(\mathbb{N}).
\end{align*}
For every $A\in\mathcal{C}_2(\ell^2(\mathbb{N}))$ and $x\in\ell^2(\mathbb{N})$, we have
\begin{align*}
[G_i(x),A]_{\text{tr}}=[\langle x,e_i\rangle(e_i\otimes e_i),A]_{\text{tr}}=\langle x,e_i\rangle[e_i\otimes e_i,A]_{\text{tr}}=\left\langle x,\,[A,e_i\otimes e_i]_{\text{tr}} \ e_i\right\rangle.
\end{align*}
This implies that $G_i^*(A)=[A,e_i\otimes e_i]_{\text{tr}} \ e_i, \ A\in\mathcal{C}_2(\ell^2(\mathbb{N})).$
Consequently, for any $x\in\ell^2(\mathbb{N})$,
\begin{align*}
G_i^*G_i(x)&=[G_i(x),e_i\otimes e_i]_{\text{tr}} \ e_i=\langle x,e_i\rangle[e_i\otimes e_i,e_i\otimes e_i]_{\text{tr}} \ e_i=\langle x,e_i\rangle\langle e_i,e_i\rangle\langle e_i,e_i\rangle e_i=\langle x,e_i\rangle e_i.
\end{align*}
Similarly,
\begin{align*}
[H_i(x),A]_{\text{tr}}=\langle x,e_i\rangle[\sqrt{2}e_i\otimes e_i+e_{i+1}\otimes e_i, A]_{\text{tr}}=\big\langle x,[A,\sqrt{2}e_i\otimes e_i+e_{i+1}\otimes e_i]_{\text{tr}} \ e_i\big\rangle
\end{align*}
implies that
\begin{align*}
H_i^*(A)=[A,\sqrt{2}e_i\otimes e_i+e_{i+1}\otimes e_i]_{\text{tr}} \ e_i, \ A\in\mathcal{C}_2(\ell^2(\mathbb{N})).
\end{align*}
Therefore,
\begin{align*}
&H_i^*H_i(x)\\
&=[H_i(x),\sqrt{2}e_i\otimes e_i+e_{i+1}\otimes e_i]_{\text{tr}} \ e_i\\
&=\langle x,e_i\rangle[\sqrt{2}e_i\otimes e_i+e_{i+1}\otimes e_i,\sqrt{2}e_i\otimes e_i+e_{i+1}\otimes e_i]_{\text{tr}} \ e_i\\
&=\langle x,e_i\rangle\Big(2[e_i\otimes e_i,e_i\otimes e_i]_{\text{tr}}+\sqrt{2}[e_i\otimes e_i,e_{i+1}\otimes e_i]_{\text{tr}}+\sqrt{2}[e_{i+1}\otimes e_i,e_i\otimes e_i]_{\text{tr}}+[e_{i+1}\otimes e_i,e_{i+1}\otimes e_i]_{\text{tr}}\Big)e_i\\
&=\langle x,e_i\rangle\Big(2\langle e_i,e_i\rangle\langle e_i,e_i\rangle+\sqrt{2}\langle e_i,e_{i+1}\rangle\langle e_i,e_i\rangle+\sqrt{2}\langle e_{i+1},e_i\rangle\langle e_i,e_i\rangle+\langle e_{i+1},e_{i+1}\rangle\langle e_i,e_i\rangle\Big)e_i\\
&=3\langle x,e_i\rangle e_i, \ x\in\ell^2(\mathbb{N}).
\end{align*}
For every finite subset $J \subseteq \mathbb{N}$,
\begin{align*}
&\Big\|\sum_{i\in J}(G_i^*G_i-H_i^*H_i)x\Big\|^2\\
&=\Big\|\sum_{i\in J}\big(\langle x,e_i\rangle e_i-3\langle x,e_i\rangle e_i \big)\Big\|^2\\
&=4\sum_{i\in J}|\langle x,e_i\rangle|^2\\
&\leq \frac{1}{2}\sum_{i\in J}|\langle x,e_i\rangle|^2+\frac{1}{2}\Big(\sum_{i\in J}|\langle x,e_i\rangle|^2\Big)^{1/2}\times 3\Big(\sum_{i\in J}|\langle x,e_i\rangle|^2\Big)^{1/2}+\frac{1}{3}\times 9\sum_{i\in J}|\langle x,e_i\rangle|^2\\
&=\frac{1}{2}\Big\|\sum_{i\in J}G_i^*G_ix\Big\|^2+\frac{1}{2}\Big\|\sum_{i\in J}G_i^*G_ix\Big\|\Big\|\sum_{i\in J}H_i^*H_ix\Big\|+\frac{1}{3}\Big\|\sum_{i\in J}H_i^*H_ix\Big\|^2.
\end{align*}
The condition \eqref{gi} of Theorem \ref{gp2} is satisfied for $\lambda=\frac{1}{2}, \mu=\frac{1}{2}, \nu=\frac{1}{3}$. It can be verified that
\begin{align*}
\max\left\{\lambda+\mu\epsilon,\nu+\frac{\mu}{4\epsilon}\right\}=\max\left\{\frac{1}{2}+\frac{\epsilon}{2}, \ \frac{1}{3}+\frac{1}{8\epsilon}\right\}<1
\end{align*}
whenever $\frac{3}{16} < \epsilon < 1$. Hence, by Theorem \ref{gp2}, the family $\{H_i\}_{i\in\mathbb N}$ is a HS-frame for $\ell^2(\mathbb{N})$.
\end{exa}

We next establish a stability result based on perturbations of the pre-frame operator. While Theorem \ref{gp2} is formulated in terms of perturbations of the frame operator, the following theorem considers perturbations of the pre-frame operator and yields a different set of frame bounds.
\begin{thm}\label{th2}
Let $\{G_i \}_{i\in I}$ be an HS-frame for $\mathcal{H}$ with respect to $\mathcal{K}$ with bounds $A,B$, and let $\{H_i\}_{i\in I}$ be a family of operators from $\mathcal{H}$ into $\mathcal{C}_2(\mathcal{K})$. Let  $\lambda,\mu,\nu\geq0$ such that for any finite subset $J \subseteq I$,
\begin{align}\label{enew}
\Big\|\sum_{i\in J}(G_i^*A_i-H_i^*A_i)\Big\|^2&\leq\lambda\Big\|\sum_{i\in J}G_i^*A_i\Big\|^2
+\mu\Big\|\sum_{i\in J}G_i^*A_i\Big\|\Big\|\sum_{i\in J}H_i^*A_i\Big\|+\nu\Big\|\sum_{i\in J}H_i^*A_i\Big\|^2, \ A_i\in \mathcal{C}_2(\mathcal{K}).
\end{align}
If there exists an $\epsilon>0$ such that $\max\big\{\lambda+\mu\epsilon,\nu+\frac{\mu}{4\epsilon}\big\}<1$,
then $\{H_i\}_{i\in I}$ is an HS-frame for $\mathcal{H}$ with respect to $\mathcal{K}$ with bounds
$A\left(\frac{1-\sqrt{\lambda+\mu\epsilon}}{1+\sqrt{\nu+\frac{\mu}{4\epsilon}}}\right)^2$ and
$B\left(\frac{1+\sqrt{\lambda+\mu\epsilon}}{1-\sqrt{\nu+\frac{\mu}{4\epsilon}}}\right)^2$.
\end{thm}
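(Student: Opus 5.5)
Set $\alpha=\sqrt{\lambda+\mu\epsilon}$ and $\beta=\sqrt{\nu+\frac{\mu}{4\epsilon}}$, so that $\alpha,\beta\in[0,1)$. The plan mirrors the proof of Theorem \ref{gp2}, but works with the pre-frame operators instead of the frame operators.

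First, exactly as in \eqref{e1}, we apply the $\epsilon$-version of Young's inequality \eqref{y2} to the mixed term in \eqref{enew}, bound the sum of squares by the square of the sum, and take square roots. This gives, for every finite $J\subseteq I$ and all $A_i\in\mathcal{C}_2(\mathcal{K})$,
\begin{align*}
\Big\|\sum_{i\in J}(G_i^*A_i-H_i^*A_i)\Big\|\leq \alpha\Big\|\sum_{i\in J}G_i^*A_i\Big\|+\beta\Big\|\sum_{i\in J}H_i^*A_i\Big\|.
\end{align*}
The triangle inequality then yields
\begin{align*}
\Big\|\sum_{i\in J}H_i^*A_i\Big\|\leq \frac{1+\alpha}{1-\beta}\Big\|\sum_{i\in J}G_i^*A_i\Big\|\leq \frac{1+\alpha}{1-\beta}\sqrt{B}\Big(\sum_{i\in J}\|A_i\|_2^2\Big)^{1/2},
\end{align*}
where the last step uses $\|\mathcal{T}_G\|\leq\sqrt B$. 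Applying the same estimate to differences of finite partial sums shows that $\sum_{i\in I}H_i^*A_i$ converges (unconditionally) for every $\{A_i\}\in\bigoplus_{i\in I}\mathcal{C}_2(\mathcal{K})$. Hence $\mathcal{T}_H$ is well defined and bounded with $\|\mathcal{T}_H\|\leq \sqrt B\,\frac{1+\alpha}{1-\beta}$. Since $\mathcal{T}_H^*x=\{H_ix\}_{i\in I}$, this gives the upper bound $B\big(\frac{1+\alpha}{1-\beta}\big)^2$.

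Passing to the limit over $J$, we get the operator inequality
\begin{align*}
\|\mathcal{T}_G\{A_i\}-\mathcal{T}_H\{A_i\}\|\leq\alpha\|\mathcal{T}_G\{A_i\}\|+\beta\|\mathcal{T}_H\{A_i\}\|
\end{align*}
on all of $\bigoplus_{i\in I}\mathcal{C}_2(\mathcal{K})$. For the lower bound, the main obstacle is that $\mathcal{T}_G$ is not invertible, so Lemma \ref{L1} cannot be applied to an operator like $\mathcal{T}_H\mathcal{T}_G^{-1}$. Instead we restrict to the range of $\mathcal{T}_G^*$. Given $x\in\mathcal{H}$, put $A_i=G_i\mathcal{S}_G^{-1}x$. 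Then $\mathcal{T}_G\{A_i\}=x$, and the inequality reads
\begin{align*}
\|x-Ux\|\leq\alpha\|x\|+\beta\|Ux\|, \quad\text{where } U=\mathcal{T}_H\mathcal{T}_G^*\mathcal{S}_G^{-1}=\sum_{i\in I}H_i^*G_i\mathcal{S}_G^{-1}.
\end{align*}
Here $U$ is linear on $\mathcal{H}$. By Lemma \ref{L1}, $U$ is invertible with $\|U^{-1}\|\leq\frac{1+\beta}{1-\alpha}$, so $\mathcal{T}_H$ is onto $\mathcal{H}$.

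Finally, every $x$ can be written $x=\mathcal{T}_H\{G_i\mathcal{S}_G^{-1}U^{-1}x\}$. Using $\|\{G_i\mathcal{S}_G^{-1}y\}\|^2\leq A^{-1}\|y\|^2$ (the canonical dual has upper bound $A^{-1}$), we obtain
\begin{align*}
\|x\|^2=\big\langle \{G_i\mathcal{S}_G^{-1}U^{-1}x\},\mathcal{T}_H^*x\big\rangle\leq A^{-1/2}\,\frac{1+\beta}{1-\alpha}\,\|x\|\Big(\sum_{i\in I}\|H_ix\|_2^2\Big)^{1/2}.
\end{align*}
Squaring gives $\sum_{i\in I}\|H_ix\|_2^2\geq A\big(\frac{1-\alpha}{1+\beta}\big)^2\|x\|^2$, which is the claimed lower bound.
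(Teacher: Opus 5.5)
Your proposal is correct and follows the paper's proof essentially step for step: the Young's-inequality reduction, the triangle-inequality bound giving $\|\mathcal{T}_H\|\leq\sqrt{B}\,\frac{1+\alpha}{1-\beta}$, Lemma~\ref{L1} applied to $\mathcal{T}_H\mathcal{T}_G^*\mathcal{S}_G^{-1}$ (the paper's $\mathcal{E}$), and the Cauchy--Schwarz estimate against the canonical dual are all exactly as in the paper. The only minor differences are these: you justify convergence of $\sum_{i\in I}H_i^*A_i$ by a Cauchy argument, which the paper merely asserts, and you get the Bessel bound by identifying $\mathcal{T}_H^*x=\{H_ix\}_{i\in I}$ coordinatewise, whereas the paper tests $\mathcal{T}_H$ against the truncated sequence $A_i=H_ix$ for $i\in J$.
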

\begin{proof}
For any finite subset $J\subseteq I$ and $A_i\in \mathcal{C}_2(\mathcal{K})$, we compute
\begin{align}\label{eq1}
&\Big\|\sum_{i\in J}(G_i^*A_i-H_i^*A_i)\Big\|^2\notag\\
& \leq \lambda\Big\|\sum_{i\in J}G_i^*A_i\Big\|^2+\mu\Big\|\sum_{i\in J}G_i^*A_i\Big\|\Big\|\sum_{i\in J}H_i^*A_i\Big\|+\nu\Big\|\sum_{i\in J}H_i^*A_i\Big\|^2\notag\\
& \leq \lambda\Big\|\sum_{i\in J}G_i^*A_i\Big\|^2+\mu\epsilon\Big\|\sum_{i\in J}G_i^*A_i\Big\|^2 +\frac{\mu}{4\epsilon} \Big\|\sum_{i\in J}H_i^*A_i\Big\|^2 +\nu\Big\|\sum_{i\in J}H_i^*A_i\Big\|^2 \quad \text{(by \eqref{y2})}\notag\\
&=\left(\sqrt{\lambda+\mu\epsilon} \ \Big\|\sum_{i\in J}G_i^*A_i\Big\|\right)^2+\left( \sqrt{\nu+\frac{\mu}{4\epsilon}} \ \Big\|\sum_{i\in J}H_i^*A_i\Big\|\right)^2\notag\\
& \leq \left(\sqrt{\lambda+\mu\epsilon} \ \Big\|\sum_{i\in J}G_i^*A_i\Big\| +\sqrt{\nu+\frac{\mu}{4\epsilon}} \ \Big\|\sum_{i\in J}H_i^*A_i\Big\|\right)^2\notag\\
\intertext{that gives}
&\Big\|\sum_{i\in J}(G_i^*A_i-H_i^*A_i)\Big\|  \leq \sqrt{\lambda+\mu\epsilon} \ \Big\|\sum_{i\in J}G_i^*A_i\Big\| +\sqrt{\nu+\frac{\mu}{4\epsilon}} \ \Big\|\sum_{i\in J}H_i^*A_i\Big\|.
\end{align}
Now
\begin{align*}
\Big\|\sum_{i\in J}G_i^*A_i\Big\| =\sup_{\|y\|=1}\Big|\Big\langle\sum_{i\in J}G_i^*A_i,y\Big\rangle\Big|
=\sup_{\|y\|=1}\Big|\sum_{i\in J}[ A_i,G_i y]_{\textbf{tr}}\Big|
\leq \Big(\sum_{i\in J}\|A_i\|_2^2\Big)^{1/2}\sup_{\|y\|=1}\Big(\sum_{i\in J}\|G_i y\|_2^2\Big)^{1/2}
\intertext{and}
\Big\|\sum_{i\in J}H_i^*A_i\Big\|\leq\Big\|\sum_{i\in J}(G_i^*A_i-H_i^*A_i)\Big\|+\Big\|\sum_{i\in J}G_i^*A_i\Big\|
\leq\Big(1+\sqrt{\lambda+\mu\epsilon}\Big)\Big\|\sum_{i\in J}G_i^*A_i\Big\|+\sqrt{\nu+\frac{\mu}{4\epsilon}}\Big\|\sum_{i\in J}H_i^*A_i\Big\|
\intertext{gives}
\Big\|\sum_{i\in J}H_i^*A_i\Big\|\leq\frac{1+\sqrt{\lambda+\mu\epsilon}}{1-\sqrt{\nu+\frac{\mu}{4\epsilon}}} \ \Big\|\sum_{i\in J}G_i^*A_i\Big\|\leq\sqrt{B}\left(\frac{1+\sqrt{\lambda+\mu\epsilon}}{1-\sqrt{\nu+\frac{\mu}{4\epsilon}}}\right)\Big(\sum_{i\in J}\|A_i\|_2^2\Big)^{1/2}.
\end{align*}
This shows that $\mathcal{T}_H:\bigoplus\limits_{i\in I} \mathcal{C}_2(\mathcal{K})\to \mathcal{H}$ defined by
\begin{align*}
\mathcal{T}_H\big(\{A_i\}_{i\in I}\big)=\sum_{i\in I}H_i^*A_i, \ \{A_i\}_{i\in I} \in \bigoplus\limits_{i \in I} \mathcal{C}_2(\mathcal{K})
\end{align*}
is a well-defined and bounded operator with $\|\mathcal{T}_H\| \leq \sqrt{B}\left(\frac{1+\sqrt{\lambda+\mu\epsilon}}{1-\sqrt{\nu+\frac{\mu}{4\epsilon}}}\right)$. For a fixed finite subset $J\subseteq I$ and $x\in\mathcal H$, define $\{A_i\}_{i\in I}\in \bigoplus\limits_{i\in I} \mathcal{C}_2(\mathcal{K})$ by
\begin{align*}
A_i =\begin{cases}
H_i x, & i \in J,\\
0, & i \notin J.
\end{cases}
\end{align*}
Then
\begin{align*}
\sum_{i\in J}\|H_i x\|_2^2=\sum_{i\in J}[H_i x,H_i x]_{\text{tr}}
&=\sum_{i\in J}\langle H_i^*H_i x,x\rangle \\
&=\Big\langle \sum_{i\in I}H_i^*A_i,x\Big\rangle\\
&\leq \Big\|\sum_{i\in I}H_i^*A_i \Big\|\,\|x\| \\
&\leq \|\mathcal{T}_H\|\,\|\{A_i\}_{i\in I}\|\,\|x\| \\
&=\|\mathcal{T}_H\|\Big(\sum_{i\in J}\|H_i x\|_2^2\Big)^{1/2}\|x\|.
\end{align*}
This implies that $\sum\limits_{i\in J}\|H_i x\|_2^2\leq\|\mathcal{T}_H\|^2\|x\|^2$. Since $J$ is arbitrary, it follows that
\begin{align*}
\sum_{i\in I}\|H_i x\|_2^2\leq\|\mathcal{T}_H\|^2\|x\|^2, \ x \in \mathcal{H}.
\end{align*}
Therefore, $\{H_i\}_{i\in I}$ is an HS-Bessel sequence for $\mathcal{H}$ with Bessel bound $B\left(\frac{1+\sqrt{\lambda+\mu\epsilon}}{1-\sqrt{\nu+\frac{\mu}{4\epsilon}}}\right)^2$.

Now, let $\mathcal{T}_G$ and $\mathcal{S}_G$ be the pre-frame operator and the frame operator associated with $\{G_i\}_{i\in I}$. Define $\mathcal{E}=\mathcal{T}_H\mathcal{T}_G^*{\mathcal{S}_G}^{-1}$. Applying \eqref{eq1} to finite subsets $J \subseteq I$ with $A_i=G_i {\mathcal{S}_G}^{-1}x$ and passing to the limit over finite subsets of $I$, we obtain
\begin{align*}
\|x-\mathcal{E}x\|&=\Big\|\sum_{i\in I}\big(G_i^*G_i {\mathcal{S}_G}^{-1}x-H_i^*G_i {\mathcal{S}_G}^{-1}x\big)\Big\| \\
&\leq\sqrt{\lambda+\mu\epsilon} \ \Big\|\sum_{i\in I}G_i^*G_i {\mathcal{S}_G}^{-1}x\Big\|
+\sqrt{\nu+\frac{\mu}{4\epsilon}} \ \Big\|\sum_{i\in I}H_i^*G_i {\mathcal{S}_G}^{-1}x\Big\|\\
&=\sqrt{\lambda+\mu\epsilon} \ \|x\|+\sqrt{\nu+\frac{\mu}{4\epsilon}} \ \|\mathcal{E}x\|\\
\end{align*}
Since $0\leq\max\Big\{\sqrt{\lambda+\mu\epsilon},\sqrt{\nu+\frac{\mu}{4\epsilon}}\Big\}<1,
$ Lemma \ref{L1} implies that $\mathcal{E}$ is invertible and $\|\mathcal{E}^{-1}\|\leq\frac{1+\sqrt{\nu+\frac{\mu}{4\epsilon}}}{1-\sqrt{\lambda+\mu\epsilon}}$. Next,
\begin{align*}
\|x\|^2=|\langle \mathcal{E}\mathcal{E}^{-1}x,x\rangle|
&=\Big|\Big\langle\sum_{i\in I}H_i^*G_i {\mathcal{S}_G}^{-1}\mathcal{E}^{-1}x,x\Big\rangle\Big| \\
&=\Big|\sum_{i\in I}[G_i {\mathcal{S}_G}^{-1}\mathcal{E}^{-1}x, H_i x]_{\textbf{tr}}\Big| \\
&\leq \sum_{i\in I}\|G_i {\mathcal{S}_G}^{-1}\mathcal{E}^{-1}x\|_2 \|H_i x\|_2 \\
&\leq\Big(\sum_{i\in I}\|G_i {\mathcal{S}_G}^{-1}\mathcal{E}^{-1}x\|_2^2\Big)^{1/2}\Big(\sum_{i\in I}\|H_i x\|_2^2\Big)^{1/2}\\
& \leq\frac{1}{\sqrt{A}}\|\mathcal{E}^{-1}x\|\Big(\sum_{i\in I}\|H_i x\|_2^2\Big)^{1/2}\\
& \leq \frac{1}{\sqrt{A}} \left(\frac{1+\sqrt{\nu+\frac{\mu}{4\epsilon}}}{1-\sqrt{\lambda+\mu\epsilon}}\right)\|x\|\Big(\sum_{i\in I}\|H_i x\|_2^2\Big)^{1/2}\\
\intertext{that implies}
&\sum_{i\in I}\|H_i x\|_2^2\geq A\left(\frac{1-\sqrt{\lambda+\mu\epsilon}}{1+\sqrt{\nu+\frac{\mu}{4\epsilon}}}\right)^2\|x\|^2, \ x \in \mathcal{H}.
\end{align*}
Hence, $\{H_i\}_{i\in I}$ is an HS-frame for $\mathcal{H}$ with respect to $\mathcal{K}$ with the required frame bounds.
\end{proof}

\begin{rem}
The perturbation criterion in Theorem \ref{th2} is sufficient but not necessary for a family to be an HS-frame. Furthermore, Theorem \ref{gp2} and Theorem \ref{th2} are not equivalent. Indeed, the family $\{H_i\}_{i\in\mathbb{N}}$ considered in Example \ref{exnew} satisfies the hypotheses of Theorem \ref{gp2}, and hence is an HS-frame, whereas it does not satisfy the perturbation condition of Theorem \ref{th2}. To verify the latter, let $J=\{1\}$ and choose $A_1=e_1\otimes e_1-\sqrt{2}e_2\otimes e_1\in\mathcal{C}_2(\ell^2(\mathbb{N}))$. Then,
\begin{align*}
&G_1^*A_1=[A_1,e_1\otimes e_1]_{\textbf{tr}}\ e_1=[e_1\otimes e_1 ,e_1\otimes e_1]_{\textbf{tr}}\ e_1-\sqrt{2}[e_2\otimes e_1,e_1\otimes e_1]_{\textbf{tr}}\ e_1=e_1, \\
&H_1^*A_1=[A_1,\sqrt{2}e_1\otimes e_1+e_2\otimes e_1]_{\textbf{tr}}\ e_1=[e_1\otimes e_1-\sqrt{2}e_2\otimes e_1,\sqrt{2}e_1\otimes e_1+e_2\otimes e_1]_{\textbf{tr}}\ e_1=0.
\end{align*}
If the inequality \eqref{enew} in Theorem \ref{th2} were satisfied, then
\begin{align*}
1=\|G_1^*A_1-H_1^*A_1\|^2\le\lambda\|G_1^*A_1\|^2+\mu\|G_1^*A_1\|\|H_1^*A_1\|+\nu\|H_1^*A_1\|^2=\lambda.
\end{align*}
However $\mu\ge0$ and $\max\left\{\lambda+\mu\epsilon,\nu+\frac{\mu}{4\epsilon}\right\}<1$ imply $\lambda \leq \lambda+\mu\epsilon<1$, which is a contradiction.
\end{rem}

We now present an example illustrating the applicability of Theorem \ref{th2}.

\begin{exa}
Let $\mathcal{H} = \mathcal{K} = \ell^2(\mathbb{N})$, and $\{G_i\}_{i \in \mathbb{N}}$ be the Parseval HS-frame given in Example \ref{exnew}. Let $\Theta \in \mathfrak{B}(\ell^2(\mathbb{N}))$ such that $\|I-\Theta\|<1$. Define $H_i:\ell^2(\mathbb{N}) \to  \mathcal{C}_2(\ell^2(\mathbb{N}))$ as
\begin{align*}
H_i=G_i \Theta, \ i \in \mathbb{N}.
\end{align*}
For every finite subset $J\subseteq \mathbb{N}$ and $A_i\in\mathcal{C}_2(\ell^2(\mathbb{N}))$, we have
\begin{align*}
\Big\|\sum_{i\in J}(G_i^*A_i-H_i^*A_i)\Big\|^2=\Big\|\sum_{i\in J}(G_i^*A_i-\Theta^*G_i^*A_i)\Big\|^2=\Big\|(I-\Theta^*)\sum_{i\in J}G_i^*A_i\Big\|^2\leq \|I-\Theta\|^2\Big\|\sum_{i\in J}G_i^*A_i\Big\|^2.
\end{align*}
The condition \eqref{enew} holds with $\lambda=\|I-\Theta\|^2,\mu=0,\nu=0$.
Since $\max\left\{\lambda+\mu\epsilon,\nu+\frac{\mu}{4\epsilon}\right\}=\|I-\Theta\|^2<1$, by Theorem \ref{th2}, $\{H_i\}_{i\in\mathbb{N}}$ is an HS-frame for $\ell^2(\mathbb{N})$ with frame bounds $\big(1-\|I-\Theta\|\big)^2$ and $\big(1+\|I-\Theta\|\big)^2$.
\end{exa}

\section{Pollard-Hilding-Type Stability of HS-Frames}\label{secp}

We now turn to nonlinear $k$-power perturbation results of Pollard-Hilding-type for HS-frames. In contrast to the quadratic Nagy-type perturbation framework developed in Section \ref{seci}, the present approach is based on nonlinear $k$-power inequalities, leading to a distinct class of stability criteria. As before, our objective is to derive explicit sufficient conditions under which the perturbed family remains an HS-frame, together with computable frame bounds.

\begin{thm}\label{gp3}
Let $\{G_i\}_{i\in I}$ be an HS-frame for $\mathcal{H}$ with respect to $\mathcal{K}$ with bounds $A,B$, and let $\{H_i\}_{i\in I}$ be a family of operators from $\mathcal{H}$ into $\mathcal{C}_2(\mathcal{K})$. If for each $k>0$, there exist constants $\lambda_1,\lambda_2\ge 0$ satisfying $\max\{\lambda_1,\lambda_2\}<\min\{1,2^{k-1}\}$
such that for every finite subset $J\subseteq I$ and $x\in\mathcal{H}$,
\begin{align}\label{eqp1}
\Big\|\sum_{i\in J}(G_i^*G_i x-H_i^*H_i x)\Big\|^k\leq \lambda_1\Big\|\sum_{i\in J}G_i^*G_i x\Big\|^k
+\lambda_2\Big\|\sum_{i\in J}H_i^*H_i x\Big\|^k.
\end{align}
Then, $\{H_i\}_{i\in I}$ is an HS-frame for $\mathcal{H}$ with respect to $\mathcal{K}$ with bounds
$A\left(\frac{1-c^{1/k}\lambda_1^{1/k}}{1+c^{1/k}\lambda_2^{1/k}}\right)$
and $B\left(\frac{1+c^{1/k}\lambda_1^{1/k}}{1-c^{1/k}\lambda_2^{1/k}}\right)$, where
$c=\begin{cases}
1, & k\ge 1,\\[1mm]
2^{1-k}, & k\le 1.
\end{cases}$
\end{thm}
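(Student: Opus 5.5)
The plan is to reduce condition \eqref{eqp1} to a linear inequality of the form already handled in the proof of Theorem \ref{gp2}, and then repeat that argument verbatim. Fix a finite $J\subseteq I$ and $x\in\mathcal{H}$, and abbreviate
\[
a=\Big\|\sum_{i\in J}G_i^*G_i x\Big\|, \qquad b=\Big\|\sum_{i\in J}H_i^*H_i x\Big\|.
\]
Taking $k$-th roots in \eqref{eqp1} gives $\|\sum_{i\in J}(G_i^*G_i x-H_i^*H_i x)\|\le (\lambda_1 a^k+\lambda_2 b^k)^{1/k}$. Next apply Theorem \ref{Theorem 2.8} with exponent $1/k$ to the nonnegative numbers $\lambda_1a^k$ and $\lambda_2 b^k$: since $(u+v)^{1/k}\le u^{1/k}+v^{1/k}$ fails in general, I would instead use the elementary bound $(u+v)^{1/k}\le c^{1/k}(u^{1/k}+v^{1/k})$, with the constant $c$ as in the statement. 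This follows from the power-mean (convexity or concavity) inequality:
\begin{itemize}
\item for $k\ge 1$ (so $1/k\le1$), $t\mapsto t^{1/k}$ is subadditive, which gives $c=1$;
\item for $k<1$, convexity of $t\mapsto t^{1/k}$ gives $(u+v)^{1/k}\le 2^{1/k-1}(u^{1/k}+v^{1/k})$, so that $c^{1/k}=2^{(1-k)/k}$, i.e.\ $c=2^{1-k}$.
\end{itemize}
The result is
\[
\Big\|\sum_{i\in J}(G_i^*G_i x-H_i^*H_i x)\Big\|\le c^{1/k}\lambda_1^{1/k}\,a+c^{1/k}\lambda_2^{1/k}\,b .
\]
Set $\alpha=c^{1/k}\lambda_1^{1/k}$ and $\beta=c^{1/k}\lambda_2^{1/k}$. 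This is exactly inequality \eqref{e1} with $\sqrt{\lambda+\mu\epsilon}$ replaced by $\alpha$ and $\sqrt{\nu+\mu/(4\epsilon)}$ replaced by $\beta$.

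The step I expect to be the main obstacle is verifying that $\alpha,\beta<1$, since this is what the later steps need. One has $\alpha<1$ exactly when $c\lambda_1<1$, i.e.\ $\lambda_1<1/c$.
\begin{itemize}
\item For $k\ge1$, this is $\lambda_1<1$, which follows from $\max\{\lambda_1,\lambda_2\}<\min\{1,2^{k-1}\}=1$.
\item For $k<1$, it requires $\lambda_1<2^{k-1}$, which is precisely what $\min\{1,2^{k-1}\}=2^{k-1}$ provides.
\end{itemize}
The same holds for $\lambda_2$. So the hypothesis $\max\{\lambda_1,\lambda_2\}<\min\{1,2^{k-1}\}$ is exactly tailored to the constant $c$. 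I will check this case distinction carefully; the boundary value $k=1$ is consistent, since both cases give $c=1$.

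With the linear inequality in hand, the remaining steps follow the proof of Theorem \ref{gp2}.
\begin{itemize}
\item \emph{Upper bound.} The triangle inequality gives $b\le(1+\alpha)a+\beta b$, hence $b\le\frac{1+\alpha}{1-\beta}a$. The Cauchy–Schwarz estimate $a\le B\|x\|$ then yields uniform boundedness of the partial sums of $\sum H_i^*H_ix$. By the same reasoning as in Theorem \ref{gp2}, this gives convergence and a bounded $\mathcal{S}_H$, so $\{H_i\}_{i\in I}$ is an HS-Bessel sequence with bound $B\frac{1+\alpha}{1-\beta}$.
\item \emph{Passage to the limit.} Passing to the limit over $J$ gives $\|\mathcal{S}_Gx-\mathcal{S}_Hx\|\le\alpha\|\mathcal{S}_Gx\|+\beta\|\mathcal{S}_Hx\|$.
\item \emph{Invertibility.} Substituting $\mathcal{S}_G^{-1}x$ for $x$ and applying Lemma \ref{L1} to $U=\mathcal{S}_H\mathcal{S}_G^{-1}$ shows that $\mathcal{S}_H$ is invertible.
\item \emph{Lower bound.} From $\|\mathcal{S}_H^{-1}\|\le \|\mathcal{S}_G^{-1}\|\,\|\mathcal{S}_G\mathcal{S}_H^{-1}\|\le \frac{1+\beta}{A(1-\alpha)}$ we obtain the lower frame bound $A\frac{1-\alpha}{1+\beta}$, as claimed.
\end{itemize}
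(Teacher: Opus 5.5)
Your proposal is correct and follows the paper's proof essentially step for step. The paper also takes $k$-th roots and applies Theorem \ref{Theorem 2.8} with exponent $k$ to $\lambda_1^{1/k}a$ and $\lambda_2^{1/k}b$; this is exactly your bound $(u+v)^{1/k}\le c^{1/k}(u^{1/k}+v^{1/k})$, so your self-contained subadditivity/convexity justification reproduces it. From there it checks, as you do, that $\max\{\lambda_1,\lambda_2\}<\min\{1,2^{k-1}\}$ gives $c^{1/k}\lambda_j^{1/k}<1$, and then reruns the argument of Theorem \ref{gp2}.
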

\begin{proof}
For any finite set $J\subseteq I$ and $x \in \mathcal{H}$, we compute
\begin{align*}
&\Big\|\sum_{i\in J}(G_i^*G_i x-H_i^*H_i x)\Big\|\\
&\leq \left[\lambda_1\Big\|\sum_{i\in J}G_i^*G_i x\Big\|^k+\lambda_2\Big\|\sum_{i\in J}H_i^*H_i x\Big\|^k \right]^{1/k} \\
&=\left[\left(\lambda_1^{1/k}\Big\|\sum_{i\in J}G_i^*G_i x\Big\|\right)^k+\left(\lambda_2^{1/k}\Big\|\sum_{i\in J}H_i^*H_i x\Big\|\right)^k \right]^{1/k}\\
&\leq c^{1/k}\left[\left(\lambda_1^{1/k}\Big\|\sum_{i\in J}G_i^*G_i x\Big\|+\lambda_2^{1/k}\Big\|\sum_{i\in J}H_i^*H_i x\Big\|\right)^k \right]^{1/k} \ \text{(using Theorem \ref{Theorem 2.8})}\\
&=c^{1/k}\lambda_1^{1/k}\Big\|\sum_{i\in J}G_i^*G_i x\Big\|+c^{1/k}\lambda_2^{1/k}\Big\|\sum_{i\in J}H_i^*H_i x\Big\|, \quad \text{where} \ c=\begin{cases}
1,  & k \geq 1,\\
2^{1-k}, & k \leq 1.
\end{cases}
\end{align*}
Since $\max\{\lambda_1,\lambda_2\} < \min\{1,2^{k-1}\}$, it follows that $\max\big\{c^{1/k} \lambda_1^{1/k} ,c^{1/k} \lambda_2^{1/k} \big\} < 1$. Hence, by following the same argument as in the proof of Theorem \ref{gp2}, it follows that $\{H_i\}_{i\in I}$ is an HS-frame for $\mathcal{H}$ with frame bounds
$A\left(\frac{1-c^{1/k}\lambda_1^{1/k}}{1+c^{1/k}\lambda_2^{1/k}}\right)$
and $B\left(\frac{1+c^{1/k}\lambda_1^{1/k}}{1-c^{1/k}\lambda_2^{1/k}}\right)$.
\end{proof}

\begin{rem}
The perturbation estimate used in the proof of Theorem \ref{gp2} is first reduced, via Young's inequality, to a quadratic inequality involving separate contributions from the original and perturbed families. Theorem \ref{gp3}, on the other hand, is based on a direct nonlinear $k$-power perturbation estimate and therefore avoids this quadratic reduction. Thus, Theorem \ref{gp3} provides a natural nonlinear counterpart to the quadratic framework underlying Theorem \ref{gp2}.
\end{rem}

The following example illustrates Theorem \ref{gp3}.

\begin{exa}
Let $\mathcal{H}=\mathcal{K}=\ell^{2}(\mathbb{N})$. Consider the Parseval HS-frame $\{G_i \}_{i\in \mathbb{N}}$ and the family of operators $\{H_i \}_{i\in \mathbb{N}}$ given in Example \ref{exnew}. For every finite subset $J\subseteq\mathbb{N}$ and every $x\in\mathcal{H}$, we have
\begin{align*}
\Big\|\sum_{i\in J}G_i^{*}G_i x\Big\|^{k}&=\Big(\sum_{i\in J}|\langle x, e_i \rangle|^2\Big)^{k/2},\\
\Big\|\sum_{i\in J}H_i^{*}H_i x\Big\|^{k}&=3^{k}\Big(\sum_{i\in J}|\langle x, e_i \rangle|^2\Big)^{k/2},\\
\Big\|\sum_{i\in J}(G_i^{*}G_ix-H_i^{*}H_ix)\Big\|^{k}&=2^{k}\Big(\sum_{i\in J}|\langle x, e_i \rangle|^2\Big)^{k/2}.
\end{align*}
\textbf{Case 1:} For $k\ge1$, choose $\lambda_1=\lambda_2=\alpha_k$ where $\frac{2^{k}}{1+3^{k}}<\alpha_k<1$.
Then,
\begin{align*}
\Big\|\sum_{i\in J}(G_i^{*}G_ix-H_i^{*}H_ix)\Big\|^{k}&=2^{k}\Big(\sum_{i\in J}|\langle x, e_i \rangle|^2\Big)^{k/2}\\
&\leq \alpha_k\Big(\sum_{i\in J}|\langle x, e_i \rangle|^2\Big)^{k/2}+\alpha_k3^{k}\Big(\sum_{i\in J}|\langle x, e_i \rangle|^2\Big)^{k/2}\\
&=\lambda_1\Big\|\sum_{i\in J}G_i^{*}G_i x\Big\|^{k}+\lambda_2\Big\|\sum_{i\in J}H_i^{*}H_i x\Big\|^{k}.
\end{align*}
Moreover, $\max\{\lambda_1,\lambda_2\}=\alpha_k<1=\min\{1,2^{k-1}\}$. Therefore, by Theorem \ref{gp3}, $\{H_i\}_{i\in\mathbb{N}}$ is an HS-frame for $\ell^{2}(\mathbb{N})$ with frame bounds $\left(\frac{1-\alpha_k^{1/k}}{1+\alpha_k^{1/k}}\right)$ and $\left(\frac{1+\alpha_k^{1/k}}{1-\alpha_k^{1/k}}\right)$.

\noindent\textbf{Case 2:} For $0<k<1$, choose $\lambda_1=\lambda_2=\beta_k2^{k-1}$ where $\frac{2}{1+3^{k}}<\beta_k<1$.
In this case,
\begin{align*}
\Big\|\sum_{i\in J}(G_i^{*}G_ix-H_i^{*}H_ix)\Big\|^{k}&=2^{k}\Big(\sum_{i\in J}|\langle x, e_i \rangle|^2\Big)^{k/2}\\
&\leq \beta_k2^{k-1}\Big(\sum_{i\in J}|\langle x, e_i \rangle|^2\Big)^{k/2}+\beta_k2^{k-1}3^{k}\Big(\sum_{i\in J}|\langle x, e_i \rangle|^2\Big)^{k/2}\\
&=\lambda_1\Big\|\sum_{i\in J}G_i^{*}G_i x\Big\|^{k}+\lambda_2\Big\|\sum_{i\in J}H_i^{*}H_i x\Big\|^{k}.
\end{align*}
Furthermore, $\max\{\lambda_1,\lambda_2\}=\beta_k2^{k-1}<2^{k-1}=\min\{1,2^{k-1}\}$. Hence, by Theorem \ref{gp3}, $\{H_i\}_{i\in\mathbb{N}}$ is an HS-frame for $\ell^{2}(\mathbb{N})$ with frame bounds $\left(\frac{1-\beta_k^{1/k}}{1+\beta_k^{1/k}}\right)$ and $\left(\frac{1+\beta_k^{1/k}}{1-\beta_k^{1/k}}\right)$.
\end{exa}

Just as the quadratic Nagy-type framework admits both frame operator and preframe operator formulations, the Pollard-Hilding-type framework also extends naturally to perturbations of the preframe operator. This leads to the following stability theorem with explicit frame bounds.

\begin{thm}\label{th3}
Let $\{G_i \}_{i\in I}$ be an HS-frame for $\mathcal{H}$ with respect to $\mathcal{K}$ with bounds $A,B$, and let $\{H_i\}_{i\in I}$ be a family of operators from $\mathcal{H}$ into $\mathcal{C}_2(\mathcal{K})$. If for each $k > 0$, there exist constants $\lambda_{1}, \lambda_{2} \geq 0$ satisfying $\max\{\lambda_{1}, \lambda_{2}\} < \min\{1, 2^{k-1}\}$ such that for all finite subsets $J \subseteq I$,
\begin{align}\label{eqp2}
\Big\|\sum_{i\in J}(G_i^*A_i-H_i^*A_i)\Big\|^k&\leq \lambda_1\Big\|\sum_{i\in J}G_i^*A_i\Big\|^k
+\lambda_2\Big\|\sum_{i\in J}H_i^*A_i\Big\|^k , \ A_i\in \mathcal{C}_2(\mathcal{K}).
\end{align}
Then, $\{H_i\}_{i\in I}$ is an HS-frame for $\mathcal{H}$ with respect to $\mathcal{K}$ with bounds
$A\left(\frac{1-c^{1/k} \lambda_1^{1/k} }{1+c^{1/k} \lambda_2^{1/k} }\right)^2$ and
$B\left(\frac{1+c^{1/k} \lambda_1^{1/k} }{1-c^{1/k} \lambda_2^{1/k} }\right)^2$, where $c=\begin{cases}
1,  & k \geq 1,\\
2^{1-k}, & k \leq 1 .
\end{cases}$
\end{thm}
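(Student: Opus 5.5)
The plan is to reduce Theorem \ref{th3} to the linear-type estimate already exploited in the proof of Theorem \ref{th2}, exactly as Theorem \ref{gp3} was reduced to Theorem \ref{gp2}. Fix $k>0$ and the corresponding $\lambda_1,\lambda_2$. For a finite $J\subseteq I$ and $A_i\in\mathcal{C}_2(\mathcal{K})$, take $k$-th roots in \eqref{eqp2} and write the right-hand side as $\big[(\lambda_1^{1/k}\|\sum_{i\in J}G_i^*A_i\|)^k+(\lambda_2^{1/k}\|\sum_{i\in J}H_i^*A_i\|)^k\big]^{1/k}$. Theorem \ref{Theorem 2.8} then gives
\begin{align*}
\Big\|\sum_{i\in J}(G_i^*A_i-H_i^*A_i)\Big\|\leq c^{1/k}\lambda_1^{1/k}\Big\|\sum_{i\in J}G_i^*A_i\Big\|+c^{1/k}\lambda_2^{1/k}\Big\|\sum_{i\in J}H_i^*A_i\Big\|,
\end{align*}
with $c$ as in the statement. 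The hypothesis $\max\{\lambda_1,\lambda_2\}<\min\{1,2^{k-1}\}$ yields $\alpha:=c^{1/k}\lambda_1^{1/k}<1$ and $\beta:=c^{1/k}\lambda_2^{1/k}<1$. For $k\ge1$ this holds because $c=1$ and $\lambda_j<1$. For $k<1$ we have $c\lambda_j<2^{1-k}2^{k-1}=1$.

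The rest of the argument copies the proof of Theorem \ref{th2}, with $\sqrt{\lambda+\mu\epsilon}$ replaced by $\alpha$ and $\sqrt{\nu+\mu/(4\epsilon)}$ replaced by $\beta$.
\begin{itemize}
\item[(i)] \emph{Bessel property.} The triangle inequality gives $\|\sum_J H_i^*A_i\|\le\frac{1+\alpha}{1-\beta}\|\sum_J G_i^*A_i\|$. The Cauchy--Schwarz estimate $\|\sum_J G_i^*A_i\|\le\sqrt{B}(\sum_J\|A_i\|_2^2)^{1/2}$ then shows that $\mathcal{T}_H$ is well defined and bounded, with $\|\mathcal{T}_H\|\le\sqrt{B}\frac{1+\alpha}{1-\beta}$. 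Testing $\mathcal{T}_H$ on the sequence $A_i=H_ix$ for $i\in J$ and $A_i=0$ otherwise gives $\sum_I\|H_ix\|_2^2\le\|\mathcal{T}_H\|^2\|x\|^2$. This is the upper bound $B\big(\frac{1+\alpha}{1-\beta}\big)^2$.
\item[(ii)] \emph{Invertibility of an auxiliary operator.} Set $\mathcal{E}=\mathcal{T}_H\mathcal{T}_G^*\mathcal{S}_G^{-1}$. Apply the displayed inequality with $A_i=G_i\mathcal{S}_G^{-1}x$ and pass to the limit over finite sets; both series converge because both families are Bessel. This gives $\|x-\mathcal{E}x\|\le\alpha\|x\|+\beta\|\mathcal{E}x\|$. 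Lemma \ref{L1} then shows that $\mathcal{E}$ is invertible with $\|\mathcal{E}^{-1}\|\le\frac{1+\beta}{1-\alpha}$.
\item[(iii)] \emph{Lower bound.} Expand $\|x\|^2=\langle\mathcal{E}\mathcal{E}^{-1}x,x\rangle$ as $\sum_I[G_i\mathcal{S}_G^{-1}\mathcal{E}^{-1}x,H_ix]_{\textbf{tr}}$. Cauchy--Schwarz, together with the bound $A^{-1}$ for the canonical dual, gives $\|x\|^2\le A^{-1/2}\frac{1+\beta}{1-\alpha}\|x\|(\sum_I\|H_ix\|_2^2)^{1/2}$. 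Squaring yields the lower bound $A\big(\frac{1-\alpha}{1+\beta}\big)^2$.
\end{itemize}

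The only genuinely new point is the first step: checking that the constant $c$ from Theorem \ref{Theorem 2.8} combines with $\min\{1,2^{k-1}\}$ to give $\alpha,\beta<1$ in both regimes of $k$. The limit passage in (ii) is the remaining point needing care; it is justified by the boundedness of $\mathcal{T}_H$ obtained in (i). Everything else is a verbatim transcription of the proof of Theorem \ref{th2}.
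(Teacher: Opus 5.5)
Your proposal is correct and follows the paper's proof. The paper likewise uses Theorem \ref{Theorem 2.8} to reduce \eqref{eqp2} to the linear estimate with constants $c^{1/k}\lambda_j^{1/k}<1$, as in Theorem \ref{gp3}, and then defers to the argument of Theorem \ref{th2}: the Bessel bound via $\mathcal{T}_H$, invertibility of $\mathcal{E}=\mathcal{T}_H\mathcal{T}_G^*\mathcal{S}_G^{-1}$ by Lemma \ref{L1}, and the lower bound via the canonical dual. You transcribe all of this correctly, and your explicit check that $c\lambda_j<1$ in both ranges of $k$ is a step the paper only asserts.
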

\begin{proof}
Arguing as in the proof of Theorem \ref{gp3}, we obtain
\begin{align*}
\Big\|\sum_{i\in J}(G_i^*A_i-H_i^*A_i)\Big\|
\leq c^{1/k} \lambda_1^{1/k} \Big\|\sum_{i\in J}G_i^*A_i\Big\|+c^{1/k} \lambda_2^{1/k} \Big\|\sum_{i\in J}H_i^*A_i\Big\|, \quad \text{where} \ c=\begin{cases}
1,  & k \geq 1,\\
2^{1-k}, & k \leq 1.
\end{cases}
\end{align*}
Since $\max\{\lambda_1,\lambda_2\} < \min\{1,2^{k-1}\}$, we have $\max\big\{c^{1/k} \lambda_1^{1/k} ,c^{1/k} \lambda_2^{1/k} \big\} < 1$. The remainder of the proof proceeds as in the proof of Theorem \ref{th2}.
\end{proof}

Next, we give an illustration of Theorem \ref{th3}.
\begin{exa}
Let $\mathcal{H}=\mathcal{K}=\ell^2(\mathbb{N})$, and let $\{G_i\}_{i\in\mathbb{N}}$ be the Parseval HS-frame given in Example \ref{exnew}. Let $\Phi\in\mathfrak{B}(\ell^2(\mathbb{N}))$ satisfy $\|I-\Phi\|\leq \frac{1}{4}$. Define $H_i:\ell^2(\mathbb{N}) \to  \mathcal{C}_2(\ell^2(\mathbb{N}))$ as
\begin{align*}
H_i=G_i \Phi,\qquad i\in\mathbb{N}.
\end{align*}
For every finite subset $J\subseteq\mathbb{N}$ and every $A_i\in\mathcal{C}_2(\ell^2(\mathbb{N}))$, we have
\begin{align*}
\Big\|\sum_{i\in J}(G_i^*A_i-H_i^*A_i)\Big\|^k=\Big\|(I-\Phi^*)\sum_{i\in J}G_i^*A_i\Big\|^k\leq\left(\frac{1}{4}\right)^k\Big\|\sum_{i\in J}G_i^*A_i\Big\|^k.
\end{align*}
Moreover,
\begin{align*}
\Big\|\sum_{i\in J}H_i^*A_i\Big\|=\Big\|\Phi^*\sum_{i\in J}G_i^*A_i\Big\|\geq\big(1-\|I-\Phi\|\big)\Big\|\sum_{i\in J}G_i^*A_i\Big\|\geq
\frac{3}{4}\Big\|\sum_{i\in J}G_i^*A_i\Big\|.
\end{align*}
Since $2\left(\frac{1}{4}\right)^k \leq 1+\left(\frac{3}{4}\right)^k, k >0$, we obtain
\begin{align*}
\Big\|\sum_{i\in J}(G_i^*A_i-H_i^*A_i)\Big\|^k
\leq\frac{1}{2}\Big\|\sum_{i\in J}G_i^*A_i\Big\|^k+\frac{1}{2}\left(\frac{3}{4}\right)^k\Big\|\sum_{i\in J}G_i^*A_i\Big\|^k
\leq\frac{1}{2}\Big\|\sum_{i\in J}G_i^*A_i\Big\|^k+\frac{1}{2}\Big\|\sum_{i\in J}H_i^*A_i\Big\|^k.
\end{align*}
The condition \eqref{eqp2} is satisfied for $\lambda_1=\lambda_2=\frac{1}{2}$. Further, $\max\{\lambda_1,\lambda_2\}=\frac{1}{2}<\min\{1,2^{k-1}\}, \ k>0$. Therefore, by Theorem \ref{th3}, $\{H_i\}_{i\in\mathbb{N}}$ is an HS-frame for $\ell^2(\mathbb{N})$. Moreover, its frame bounds simplify to
$\left(\frac{1-2^{-1/k}}{1+2^{-1/k}}\right)^2$ and $\left(\frac{1+2^{-1/k}}{1-2^{-1/k}}\right)^2$ for $k \geq 1$, whereas they reduce to $\frac{1}{9}$ and $9$ for $0<k<1$.
\end{exa}

This completes the Pollard-Hilding-type stability analysis in both the frame and pre-frame operator settings. We conclude by considering a nonlinear perturbation condition involving mixed-power estimates of the original and perturbed frame operators, yielding an alternative stability criterion for HS-frames. For completeness, we also present the corresponding pre-frame operator version. Its proof is omitted, as it follows by arguments analogous to those developed in the preceding perturbation results.

\begin{thm}\label{pq}
Let $\{G_i\}_{i\in I}$ be an HS-frame for $\mathcal{H}$ with respect to $\mathcal{K}$ with bounds $A,B$, and let $\{H_i\}_{i\in I}$ be a family of operators from $\mathcal{H}$ into $\mathcal{C}_2(\mathcal{K})$. Assume there exist constants $\alpha, \beta \in (0,1)$ satisfying $\alpha+\beta=1$, and constants $\lambda_1,\lambda_2 \ge 0$ satisfying $\max\{\lambda_1,\lambda_2\}<1$,
such that for every finite subset $J\subseteq I$ and for all $x\in\mathcal{H}$,
\begin{align}\label{eqp3}
\Big\|\sum_{i\in J}(G_i^*G_i x - H_i^*H_i x)\Big\|\leq\lambda_1 \Big\|\sum_{i\in J}G_i^*G_i x\Big\|^\alpha\Big\|\sum_{i\in J}H_i^*H_i x\Big\|^\beta
+\lambda_2 \Big\|\sum_{i\in J}G_i^*G_i x\Big\|^\beta\Big\|\sum_{i\in J}H_i^*H_i x\Big\|^\alpha.
\end{align}
Then, $\{H_i\}_{i\in I}$ is an HS-frame for $\mathcal{H}$ with respect to $\mathcal{K}$ with  bounds
$A \left( \frac{1-\alpha\lambda_1 - \beta\lambda_2}{1+\beta\lambda_1 + \alpha\lambda_2}\right)$ and
$B \left( \frac{1+\alpha\lambda_1 +\beta\lambda_2 }{1-\beta\lambda_1 -\alpha\lambda_2 }\right)$.
\end{thm}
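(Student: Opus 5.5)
The plan is to reduce condition \eqref{eqp3} to a linear estimate of the form used in Lemma~\ref{L1}, and then repeat the argument of Theorem~\ref{gp2}. To lighten notation, fix a finite set $J\subseteq I$ and $x\in\mathcal H$, and write
\begin{align*}
a=\Big\|\sum_{i\in J}G_i^*G_i x\Big\|, \qquad b=\Big\|\sum_{i\in J}H_i^*H_i x\Big\|.
\end{align*}
Apply the weighted arithmetic--geometric mean inequality, which is Young's inequality (Theorem~\ref{Young}) with $p=1/\alpha$ and $q=1/\beta$. It gives
\begin{align*}
a^\alpha b^\beta\le \alpha a+\beta b, \qquad a^\beta b^\alpha\le \beta a+\alpha b.
\end{align*}
The cases $a=0$ or $b=0$ are trivial. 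Substituting into \eqref{eqp3} yields
\begin{align*}
\Big\|\sum_{i\in J}(G_i^*G_i x-H_i^*H_i x)\Big\|\le (\alpha\lambda_1+\beta\lambda_2)\,a+(\beta\lambda_1+\alpha\lambda_2)\,b .
\end{align*}
Set $\theta_1=\alpha\lambda_1+\beta\lambda_2$ and $\theta_2=\beta\lambda_1+\alpha\lambda_2$. Both are convex combinations of $\lambda_1$ and $\lambda_2$, so $\max\{\theta_1,\theta_2\}\le\max\{\lambda_1,\lambda_2\}<1$.

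This is exactly inequality \eqref{e1} with $\sqrt{\lambda+\mu\epsilon}$ replaced by $\theta_1$ and $\sqrt{\nu+\mu/(4\epsilon)}$ replaced by $\theta_2$. From here the proof of Theorem~\ref{gp2} carries over word for word.

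First, the triangle inequality gives $b\le(1+\theta_1)a+\theta_2 b$, so $b\le \frac{1+\theta_1}{1-\theta_2}\,a$. The Cauchy--Schwarz estimate $a\le B\|x\|$ from that proof then shows that the partial sums of $\sum_i H_i^*H_ix$ are uniformly bounded. From this one obtains unconditional convergence and a bounded operator $\mathcal S_H$ with $\langle\mathcal S_Hx,x\rangle\le B\frac{1+\theta_1}{1-\theta_2}\|x\|^2$, which is the stated upper bound.

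Second, passing to the limit over finite $J$ gives $\|\mathcal S_Gx-\mathcal S_Hx\|\le\theta_1\|\mathcal S_Gx\|+\theta_2\|\mathcal S_Hx\|$. Substituting $x\mapsto\mathcal S_G^{-1}x$ and applying Lemma~\ref{L1} to $U=\mathcal S_H\mathcal S_G^{-1}$ shows that $U$ is invertible. Hence $\mathcal S_H$ is invertible, with $\|\mathcal S_H^{-1}\|\le \frac{1+\theta_2}{A(1-\theta_1)}$, and this gives the lower bound $A\frac{1-\theta_1}{1+\theta_2}$. These are precisely the stated bounds.

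The only genuinely delicate point is the same one implicitly handled in Theorem~\ref{gp2}: justifying convergence of $\sum_i H_i^*H_ix$ from bounds on finite partial sums. I would argue it as follows. For finite $J$, the uniform bound gives $\sum_{i\in J}\|H_ix\|_2^2=\langle\sum_{i\in J}H_i^*H_ix,x\rangle\le C\|x\|^2$, so $\{H_i\}$ is HS-Bessel. Standard Bessel-sequence arguments then give unconditional convergence of $\sum_i H_i^*H_ix$. The rest is a routine transcription.
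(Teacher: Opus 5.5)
Your proposal is correct and matches the paper's proof. Young's inequality with $p=1/\alpha$, $q=1/\beta$ turns the mixed-power bound into $\theta_1 a+\theta_2 b$, where $\theta_1,\theta_2$ are convex combinations of $\lambda_1,\lambda_2$, and the argument of Theorem~\ref{gp2} (Bessel bound from partial sums, then Lemma~\ref{L1} applied to $\mathcal{S}_H\mathcal{S}_G^{-1}$) gives exactly the stated bounds. Your handling of convergence is a bit more careful than the paper's: you first get the HS-Bessel property from the uniform partial-sum bound and only then deduce unconditional convergence of $\sum_i H_i^*H_ix$, whereas the paper asserts it directly from bounded partial sums.
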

\begin{proof}
For any finite subset $J\subseteq I$ and $x \in \mathcal{H}$, we have
\begin{align*}
&\Big\|\sum_{i\in J}(G_i^*G_i x-H_i^*H_i x)\Big\|\\
&\leq \lambda_1 \Big\|\sum_{i\in J}G_i^*G_i x\Big\|^\alpha\Big\|\sum_{i\in J}H_i^*H_i x\Big\|^\beta+\lambda_2 \Big\|\sum_{i\in J}G_i^*G_i x\Big\|^\beta
\Big\|\sum_{i\in J}H_i^*H_i x\Big\|^\alpha.
\end{align*}
Applying Theorem \ref{Young} with $p=\frac{1}{\alpha}$ and $q=\frac{1}{\beta}$, we obtain
\begin{align*}
&\Big\|\sum_{i\in J}(G_i^*G_i x-H_i^*H_i x)\Big\|\\
&\leq \lambda_1\left(\alpha\Big\|\sum_{i\in J}G_i^*G_i x\Big\|+\beta\Big\|\sum_{i\in J}H_i^*H_i x\Big\|\right)+ \lambda_2\left(\beta\Big\|\sum_{i\in J}G_i^*G_i x\Big\|+\alpha\Big\|\sum_{i\in J}H_i^*H_i x\Big\|\right) \\
&=\big(\alpha\lambda_1 +\beta\lambda_2 \big)\Big\|\sum_{i\in J}G_i^*G_i x\Big\|+\big(\beta\lambda_1 +\alpha\lambda_2 \big)\Big\|\sum_{i\in J}H_i^*H_i x\Big\|.
\end{align*}
Since $\alpha+\beta=1$, the quantities $\alpha\lambda_1 +\beta\lambda_2 $ and $\beta\lambda_1 +\alpha\lambda_2 $
are convex combinations of $\lambda_1$ and $\lambda_2$. Therefore,
\begin{align*}
\max\big\{\alpha\lambda_1 +\beta\lambda_2 ,\beta\lambda_1 +\alpha\lambda_2 \big\}\leq \max\{\lambda_1,\lambda_2\} < 1.
\end{align*}
Consequently, the same argument as in the proof of Theorem \ref{gp2} implies that $\{H_i\}_{i\in I}$ is an HS-frame for $\mathcal{H}$ with respect to $\mathcal{K}$ with the desired frame bounds.
\end{proof}

\begin{thm}\label{pq1}
Let $\{G_i\}_{i\in I}$ be an HS-frame for $\mathcal{H}$ with respect to $\mathcal{K}$ with bounds $A,B$, and let $\{H_i\}_{i\in I}$ be a family of operators from $\mathcal{H}$ into $\mathcal{C}_2(\mathcal{K})$. Assume there exist constants $\alpha, \beta \in (0,1)$ satisfying $\alpha+\beta=1$ and constants $\lambda_1,\lambda_2 \ge 0$ satisfying $\max\{\lambda_1,\lambda_2\}<1$,
such that for every finite subset $J\subseteq I$,
\begin{align*}
\Big\|\sum_{i\in J}(G_i^*A_i - H_i^*A_i )\Big\|\leq\lambda_1 \Big\|\sum_{i\in J}G_i^*A_i \Big\|^\alpha\Big\|\sum_{i\in J}H_i^*A_i \Big\|^\beta
+\lambda_2 \Big\|\sum_{i\in J}G_i^*A_i \Big\|^\beta\Big\|\sum_{i\in J}H_i^*A_i \Big\|^\alpha,  \ A_i \in \mathcal{C}_2(\mathcal{K}).
\end{align*}
Then, $\{H_i\}_{i\in I}$ is an HS-frame for $\mathcal{H}$ with respect to $\mathcal{K}$ with  bounds
$A \left( \frac{1-\alpha\lambda_1 - \beta\lambda_2}{1+\beta\lambda_1 + \alpha\lambda_2}\right)^2$ and
$B \left( \frac{1+\alpha\lambda_1 +\beta\lambda_2 }{1-\beta\lambda_1 -\alpha\lambda_2 }\right)^2$.
\end{thm}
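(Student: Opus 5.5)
The plan is to reduce the mixed-power hypothesis of Theorem \ref{pq1} to a linear two-term inequality of the kind already handled in the proof of Theorem \ref{th2}, exactly as Theorem \ref{pq} was reduced to Theorem \ref{gp2}.

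\textbf{Step 1: linearize the hypothesis.} Fix a finite subset $J\subseteq I$ and $A_i\in\mathcal{C}_2(\mathcal{K})$. Write $a=\big\|\sum_{i\in J}G_i^*A_i\big\|$ and $b=\big\|\sum_{i\in J}H_i^*A_i\big\|$. Apply Theorem \ref{Young} with $p=1/\alpha$ and $q=1/\beta$ to get $a^\alpha b^\beta\le \alpha a+\beta b$ and $a^\beta b^\alpha\le \beta a+\alpha b$. When $a$ or $b$ is zero these hold trivially. This yields
\begin{align*}
\Big\|\sum_{i\in J}(G_i^*A_i-H_i^*A_i)\Big\|\le \theta_1 \Big\|\sum_{i\in J}G_i^*A_i\Big\|+\theta_2\Big\|\sum_{i\in J}H_i^*A_i\Big\|,
\end{align*}
with $\theta_1=\alpha\lambda_1+\beta\lambda_2$ and $\theta_2=\beta\lambda_1+\alpha\lambda_2$. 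Both are convex combinations of $\lambda_1,\lambda_2$, so $\max\{\theta_1,\theta_2\}<1$.

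\textbf{Step 2: upper bound.} Repeat the argument of Theorem \ref{th2} verbatim with $\sqrt{\lambda+\mu\epsilon}$ replaced by $\theta_1$ and $\sqrt{\nu+\mu/(4\epsilon)}$ replaced by $\theta_2$.
\begin{itemize}
\item The triangle inequality gives $b\le \frac{1+\theta_1}{1-\theta_2}\,a$.
\item The Cauchy--Schwarz estimate gives $a\le\sqrt{B}\,\big(\sum_{i\in J}\|A_i\|_2^2\big)^{1/2}$.
\item Together these show $\mathcal{T}_H$ is well defined and bounded, with $\|\mathcal{T}_H\|\le \sqrt{B}\,\frac{1+\theta_1}{1-\theta_2}$.
\item Testing with $A_i=H_ix$ for $i\in J$ (and $0$ otherwise) gives the HS-Bessel bound $B\big(\frac{1+\theta_1}{1-\theta_2}\big)^2$.
\end{itemize}

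\textbf{Step 3: lower bound.} Set $\mathcal{E}=\mathcal{T}_H\mathcal{T}_G^*\mathcal{S}_G^{-1}$. Apply the Step 1 inequality with $A_i=G_i\mathcal{S}_G^{-1}x$ and pass to the limit over finite $J$. Since $\mathcal{T}_G$ and $\mathcal{T}_H$ are bounded, the partial sums converge, and we obtain
\begin{align*}
\|x-\mathcal{E}x\|\le\theta_1\|x\|+\theta_2\|\mathcal{E}x\|.
\end{align*}
Lemma \ref{L1} then shows $\mathcal{E}$ is invertible with $\|\mathcal{E}^{-1}\|\le\frac{1+\theta_2}{1-\theta_1}$. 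Next write $\|x\|^2=\langle\mathcal{E}\mathcal{E}^{-1}x,x\rangle$ and apply Cauchy--Schwarz. Using the canonical-dual bound $\sum_{i\in I}\|G_i\mathcal{S}_G^{-1}y\|_2^2\le A^{-1}\|y\|^2$, this gives
\begin{align*}
\sum_{i\in I}\|H_ix\|_2^2\ge A\Big(\frac{1-\theta_1}{1+\theta_2}\Big)^2\|x\|^2.
\end{align*}
This matches the stated lower bound. The upper bound from Step 2 matches the stated one as well.

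\textbf{Main obstacle.} Nothing here is genuinely hard, since the structure is inherited from Theorem \ref{th2}. The one point needing care is the limit in Step 3. The hypothesis is stated only for finite $J$, so the limit uses continuity of the norm together with boundedness of $\mathcal{T}_H$, which Step 2 established beforehand. The Young step at degenerate values $a=0$ or $b=0$ also needs the separate remark made in Step 1.
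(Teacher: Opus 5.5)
Your proposal is correct and follows the approach the paper intends. The paper omits the proof of Theorem \ref{pq1}, saying only that it follows by analogous arguments: the Young's-inequality linearization from the proof of Theorem \ref{pq}, which gives the constants $\alpha\lambda_1+\beta\lambda_2$ and $\beta\lambda_1+\alpha\lambda_2$ (both less than $1$), followed by the pre-frame operator argument of Theorem \ref{th2}. Your Steps 1--3 carry this out faithfully.
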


We end the paper with the following example illustrating Theorem \ref{pq}.

\begin{exa}
We consider $\{G_i \}_{i\in \mathbb{N}}$ and $\{H_i \}_{i\in \mathbb{N}}$ from Example \ref{exnew}. For every finite subset $J\subseteq\mathbb{N}$ and every $x\in\mathcal{H}$, we have
\begin{align*}
&\lambda_1\Big\|\sum_{i\in J}G_i^*G_i x\Big\|^\alpha\Big\|\sum_{i\in J}H_i^*H_i x\Big\|^\beta
+\lambda_2\Big\|\sum_{i\in J}G_i^*G_i x\Big\|^\beta\Big\|\sum_{i\in J}H_i^*H_i x\Big\|^\alpha\\
&=\lambda_1\left(\Big(\sum_{i\in J}|\langle x, e_i \rangle|^2\Big)^{1/2}\right)^\alpha \left(3\Big(\sum_{i\in J}|\langle x, e_i \rangle|^2\Big)^{1/2}\right)^\beta
+\lambda_2\left(\Big(\sum_{i\in J}|\langle x, e_i \rangle|^2\Big)^{1/2}\right)^\beta \left(3\Big(\sum_{i\in J}|\langle x, e_i \rangle|^2\Big)^{1/2}\right)^\alpha\\
&=\big(\lambda_1 3^\beta+ \lambda_2 3^\alpha \big)\Big(\sum_{i\in J}|\langle x, e_i \rangle|^2\Big)^{1/2}.
\end{align*}
Now choose $\alpha=\frac{1}{3}, \ \beta=\frac{2}{3}, \ \lambda_1=\frac{4}{5}$ and $\lambda_2=\frac{1}{3}$. Then, $\alpha+\beta=1$ and $\max\{\lambda_1, \lambda_2\}<1$.
Moreover,
\begin{align*}
\Big\|\sum_{i\in J}(G_i^*G_i-H_i^*H_i)x\Big\|&=2\Big(\sum_{i\in J}|\langle x,e_i\rangle|^2\Big)^{1/2}\\
&<\left(\frac{4}{5} \times 3^{2/3}+\frac{1}{3}\times 3^{1/3}\right)\Big(\sum_{i\in J}|\langle x,e_i\rangle|^2\Big)^{1/2}\\
&=\frac{4}{5}\Big\|\sum_{i\in J}G_i^*G_i x\Big\|^{1/3}\Big\|\sum_{i\in J}H_i^*H_i x\Big\|^{2/3}+\frac{1}{3}\Big\|\sum_{i\in J}G_i^*G_i x\Big\|^{2/3}
\Big\|\sum_{i\in J}H_i^*H_i x\Big\|^{1/3}.
\end{align*}
Therefore, all the hypotheses of Theorem \ref{pq} are satisfied and hence $\{H_i\}_{i\in\mathbb N}$ is an HS-frame.
\end{exa}

%$$\text{\textbf{Statements $\&$ Declarations}}$$
%There is no competing interests and no data is used in this study.

\mbox{}
\end{document}